\begin{document}
\setcounter{page}{1}
\setlength{\unitlength}{12mm}
\newcommand{\f}{\frac}
\newtheorem{theorem}{Theorem}[section]
\newtheorem{lemma}[theorem]{Lemma}
\newtheorem{proposition}[theorem]{Proposition}
\newtheorem{corollary}[theorem]{Corollary}
\theoremstyle{definition}
\newtheorem{definition}[theorem]{Definition}
\newtheorem{example}[theorem]{Example}
\newtheorem{solution}[theorem]{Solution}
\newtheorem{xca}[theorem]{Exercise}
\theoremstyle{remark}
\newtheorem{remark}[theorem]{Remark}
\numberwithin{equation}{section}
\newcommand{\sta}{\stackrel}
\title{\bf Polar fuzzy topologies on fuzzy topological vector spaces}
\author { B. Daraby$^{a}$, N. Khosravi$^{b}$, A. Rahimi$^{c}$}
\date{\footnotesize $^{a, b, c}$Department of Mathematics, University of Maragheh, P. O. Box 55136-553, Maragheh, Iran\\}
\maketitle

\begin{abstract}
\noindent
In this paper, we introduce the concept of polar fuzzy sets on fuzzy dual spaces. Using the notion of polar fuzzy sets, we define polar linear fuzzy topologies on fuzzy dual spaces and prove the Mackey-Arens type Theorem on fuzzy topological vector spaces.


\vspace{.5cm}\leftline{{Subject Classification 2010: 03E72, 54A40
}}

\vspace{.5cm}\leftline{Keywords: Polar fuzzy sets, Polar fuzzy topology, Mackey-Arens Theorem.}
\end{abstract}
\section{Introduction}
Katsaras and Liu defined the notion of fuzzy topological vector space in \cite{k10}. Later, Katsaras changed the definition of fuzzy topological vector space  in \cite{k7} and he considered the linear fuzzy topology on the scalar field $\mathbb{K}$, which consists of all lower semi-continuous functions from $\mathbb{K}$ into $I=[0,1]$. Also,  Katsaras \cite{k8} investigated the concepts of fuzzy norm and fuzzy seminorm on vector spaces. After then, many authors such as, Cheng and Mordeson \cite{f}, Felbin \cite{g}, Bag and Samanta \cite{poi}, and so on started to introduce the notion of fuzzy normed linear space. Later, Xiao and Zhu \cite{u}, Fang \cite{dD}, Daraby et. al. (\cite{d}, \cite{d2}) redefined the notion of Felbin's \cite{g} definition of fuzzy norm and studied some properties of its topological structure. Das \cite{kh} introduced a fuzzy topology generated by fuzzy norm and studied some properties of this topology. Fang \cite{dD} introduced a new I-topology $ \mathcal{T}^*_{\Vert.\Vert} $ on the fuzzy normed vector space using the notion of fuzzy norm.

The Mackey-Arens Theorem has an important role in the theory of convex spaces. For proving this theorem in fuzzy topological vector spaces, we need to extend the concept of polar sets to fuzzy topological vector spaces. In this paper, at the first, we define the polar fuzzy sets and then introduce the concept of polar fuzzy topology. Finally, we  prove the Mackey-Arens type Theorem in fuzzy topological vector spaces.


\section{Preliminaries}
Let $ X $ be a non-empty set. A fuzzy set in $X$ is a function from $X$ into $I$.
\begin{definition}\cite{kh} Let $X$ and $Y$ be any two non-empty sets, $ f: X \rightarrow Y $ be a mapping and $ \mu $ be a fuzzy subset of $X$. Then $f(\mu)$ is a fuzzy subset of $Y$ defined by
$$f(\mu)(y)=\begin{cases}\sup_{x\in f^{-1}(y)} \mu(x) &\quad f^{-1}(y)\neq\emptyset,\\0 & \:\:\:\: else,\end{cases}$$
for all $y \in Y$, where $f^{-1}(y)=\lbrace x: f(x)=y \rbrace$. If $\eta$ is a fuzzy subset of $Y$, then the fuzzy subset $f^{-1}(\eta)$ of $X$ is defined by $f^{-1}(\eta)(x)=\eta(f(x))$ for all $x\in X$.
\end{definition}
\begin{definition}\cite{k7}
A fuzzy topology on a set $ X $ is a subset $ \tau_f $ of $ I^{X} $ satisfying the following conditions:
\begin{enumerate}
\item[$(i)$] $ \tau_f $ contains every constant fuzzy set in $ X $,
\item[$(ii)$] if $ \mu_{1},\mu_{2} \in \tau_f $, then $ \mu_{1} \wedge \mu_{2} \in \tau_f $,
\item[$(iii)$] if $ \mu_{i} \in \tau_f $ for each $ i \in A $, then $\sup_{i\in A}\mu_{i} \in \tau_f $.
\end{enumerate}
The pair $ (X,\tau_f) $ is called a fuzzy topological space.
\end{definition}
The elements of $ \tau_f $ are called fuzzy open sets in $X$.
 \begin{definition}\cite{kh}
 A fuzzy topological space $ (X,\tau_f) $ is said to be fuzzy Hausdorff if for $ x,y \in X $ and $ x\neq y $ there exist $ \eta, \beta \in \tau_f $ with $ \eta(x)=\beta(y)=1  $ and $ \eta \wedge \beta=0 $.
 \end{definition}
  A mapping $f$ from a fuzzy topological space $X$ into a fuzzy topological space $Y$ is called fuzzy continuous if $f^{-1}(\mu)$ is fuzzy open in $ X $ for each fuzzy open set $\mu$ in $Y$.

 Suppose $X$ is a fuzzy topological space and $x\in X $. A fuzzy set $\mu$ in $ X $ is called a neighborhood of $x\in X$ if there is a fuzzy open set $\eta$ with $ \eta\leq \mu $ and $ \eta(x)=\mu(x)>0 $. Warren \cite{k11} has proved that a fuzzy set $\mu$ in $X$ is fuzzy open if and only if $\mu$ is a neighborhood of $x$ for each  $ x\in X $ with $\mu(x)>0$. The collection $\mathcal{O}$ of fuzzy sets is called a fuzzy open covering of $\mu$ when $\mu \leq \sup_{\nu\in\mathcal{O}}\nu$. The fuzzy set $\mu$ is called fuzzy compact if any fuzzy open covering of $\mu$ has finite subcovering.

 \begin{definition}\cite{kh}
 If $ \mu_{1}$ and $\mu_{2} $ are two fuzzy subsets of a vector space $ E $, then the fuzzy set $ \mu_1+\mu_2 $ is defined by
 $$ (\mu_1+\mu_2)(x)=\sup_{x=x_1+x_2} (\mu_1(x_1)\wedge \mu_2(x_2)). $$
  If $ t \in \mathbb{K}, $ we define the fuzzy sets $ \mu_1 \times \mu_2  $ and $ t\mu $ as follows:
$$ (\mu_1 \times \mu_2)(x_1,x_2)= \min \lbrace \mu_1(x_1), \mu_2(x_2)\rbrace $$
and
\begin{enumerate}
\item[$(i)$] \;\; for $ t\neq 0,\: (t\mu)(x)=\mu(\frac{x}{t}) $ \: for all \: $ x \in E $,
\item[$(ii)$]\:\: for $ t=0, \: (t\mu)(x)= \begin{cases}0 &   x\neq 0, \quad \\ \sup_{y \in E} \mu(y) &x=0. \end{cases} $
\end{enumerate}
 \end{definition}

 A fuzzy set $ \mu $ in vector space $ E $ is called balanced if $ t\mu \leq \mu $ for each scalar $ t $ with $ \vert t \vert \leq1$. As is shown in \cite{k10}, $ \mu $ is balanced if and only if $ \mu(tx)\geq \mu(x) $ for each $ x\in E $ and each scalar $ t $ with $ \vert t \vert\leq1 $. Also, when $ \mu $ is balanced, we have $ \mu(0) \geq \mu(x) $ for each $ x\in E $. The fuzzy set $ \mu $ is called absorbing if and only if $ \sup_{t>0}t\mu=1 $. Then a fuzzy set $\mu$ is absorbing whenever $\mu(0)=1$. We shall say that the fuzzy set $\mu$ is convex if and only if for all $t\in I $, $t\mu+(1-t)\mu\leq\mu$, \cite{k7}. Also a fuzzy set $ \mu $ is called absolutely convex if it is balanced and convex.

\begin{definition}\cite{dD1}
 A fuzzy topology $\tau_f$ on a vector space $E$ is said to be a fuzzy linear topology, if the two mappings\\
$$ f:E \times E \rightarrow E, (x,y)\rightarrow x+y,$$
$$ g:\mathbb{K} \times E \rightarrow E, (t,x)\rightarrow tx,$$
are continuous when $ \mathbb{K} $ is equipped with the fuzzy topology induced by the usual topology, $ E \times E $ and $ \mathbb{K} \times E $ are the corresponding product fuzzy topologies. A vector space $ E $ with a fuzzy linear topology $ \tau_f $, denoted by the pair $ (E,\tau_f) $ is called fuzzy topological vector space (abbreviated to FTVS).
\end{definition}
\begin{definition}\cite{k7}
Let $(E,\tau_f)$ be a fuzzy topological vector space, the collection $ \nu\subset\tau_f $ of neighborhoods of zero is a local base whenever for each neighborhood $ \mu $ of zero and each $\theta\in(0,\mu(0))$ there is $ \gamma\in \nu $ such that $ \gamma\leq\mu $ and $ \gamma(0)>\theta $.
\end{definition}
\begin{definition}\cite{k8}
A fuzzy seminorm on $E$ is a fuzzy set $\mu$ in $ E $ which is absolutely convex and absorbing.
\end{definition}
We say that a fuzzy set $\mu$, in a vector space $E$, absorbs a fuzzy set $\eta$ if $\mu(0) > 0$ and for every $\theta <\mu(0)$ there exists $t > 0$ such that $\theta\wedge (t\eta) \leq \mu$. A fuzzy set $\mu$ in a fuzzy linear space $E$ is called bounded if it is absorbed by every neighborhood of zero.
Let $(E,\tau)$ be a topological vector space. Then the collection of all lower semicontinuous functions from $E$ in to $[0,1]$ is a fuzzy linear topology on $E$ denoted by $w(\tau)$.
\begin{proposition}\label{as}
Let $(E,\tau_f)$ be a fuzzy topological vector space. Then there is a linear topology $\tau$ on $E$ such that  $\tau_f=w(\tau)$.
\end{proposition}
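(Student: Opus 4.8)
\emph{Plan.} The topology $\tau$ is essentially forced: if $\tau_f=w(\tau)$ then $\tau$ must be the topology spanned by the level sets of the members of $\tau_f$. So let $\tau$ be the topology on $E$ generated by the family $\{\,\mu^{-1}((\alpha,1]):\mu\in\tau_f,\ \alpha\in[0,1)\,\}$ of strong level sets of fuzzy open sets. Since strong level sets commute with the fuzzy-topological operations --- $(\sup_i\mu_i)^{-1}((\alpha,1])=\bigcup_i\mu_i^{-1}((\alpha,1])$, $(\mu_1\wedge\mu_2)^{-1}((\alpha,1])=\mu_1^{-1}((\alpha,1])\cap\mu_2^{-1}((\alpha,1])$, and constant fuzzy sets have strong cuts $\emptyset$ or $E$ --- every $\mu\in\tau_f$ is lower semicontinuous for $\tau$ (its strong cuts are $\tau$-open by construction), and hence $\tau_f\subseteq w(\tau)$. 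It then remains to check that $\tau$ is a linear topology and that $w(\tau)\subseteq\tau_f$.

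That $\tau$ is linear follows by transporting fuzzy continuity to the generated topology. The bookkeeping is this: if $h\colon X\to Y$ is fuzzy continuous and $\eta$ is fuzzy open in $Y$, then $h^{-1}\big(\eta^{-1}((\alpha,1])\big)=\big(h^{-1}(\eta)\big)^{-1}((\alpha,1])$ is a strong cut of a fuzzy open set in $X$, so $h$ is continuous when $X$ and $Y$ carry the topologies generated (as $\tau$ is from $\tau_f$) by strong level sets of fuzzy open sets; and by the same identities, for the product fuzzy topologies on $E\times E$ and $\mathbb K\times E$ the topology so generated is contained in the product of the topologies generated on the factors, while the topology generated by the fuzzy topology of all lower semicontinuous functions on $\mathbb K$ is the usual topology on $\mathbb K$. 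Applying this to the maps $(x,y)\mapsto x+y$ and $(t,x)\mapsto tx$ shows that they are $\tau$-continuous, so $\tau$ is a linear topology; then translations and nonzero dilations are homeomorphisms of $(E,\tau)$ and fuzzy homeomorphisms of $(E,\tau_f)$, and $w(\tau)$ is a fuzzy linear topology by the fact recalled just before the statement.

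For $w(\tau)\subseteq\tau_f$ I would first reduce. Given $f\in w(\tau)$, write $f=\sup_{\alpha\in[0,1)}\alpha\,\chi_{\{f>\alpha\}}$; since the sets $\{f>\alpha\}$ are $\tau$-open, $\tau_f$ is closed under arbitrary suprema, and $0\in\tau_f$, it suffices to prove $\alpha\,\chi_U\in\tau_f$ for every $\tau$-open $U$ and every $\alpha\in(0,1]$. Writing $U$ as a union of finite intersections of sets $\mu^{-1}((\beta,1])$ with $\mu\in\tau_f$, $\beta\in[0,1)$, and using $\alpha\chi_{A\cap B}=(\alpha\chi_A)\wedge(\alpha\chi_B)$ and $\alpha\chi_{\bigcup_i A_i}=\sup_i\alpha\chi_{A_i}$ together with closure of $\tau_f$ under finite meets and suprema, the whole problem collapses to a single assertion: $\alpha\,\chi_{\mu^{-1}((\beta,1])}\in\tau_f$ for all $\mu\in\tau_f$, $\beta\in[0,1)$, and $\alpha\in(0,1]$.

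This assertion is the crux, and I expect it to be the main obstacle: everything above is formal, whereas here one genuinely needs the vector-space structure (the statement is false for general fuzzy topological spaces). Translating, we may assume $\mu(0)>\beta$, so $V:=\mu^{-1}((\beta,1])$ is a $\tau$-neighbourhood of $0$ and the task is to realise $\alpha\chi_V$ inside $\tau_f$. I would proceed in two stages. First, using continuity of scalar multiplication at $(0,0)$ one extracts, for each $\theta<\mu(0)$, a fuzzy open $\nu$ and a $\delta>0$ with $\rho(t)\wedge\nu(x)\le\mu(tx)$ for all $t,x$, where $\rho$ is lower semicontinuous on $\mathbb K$ with $\rho>\theta$ on $\{\,|t|<\delta\,\}$; this yields $\theta\wedge\nu^{\ast}\le\mu$ with $\nu^{\ast}:=\sup_{0<|t|<\delta}(t\nu)\in\tau_f$ balanced and $\nu^{\ast}(0)=\nu(0)>\theta$, so $\tau_f$ has a local base at $0$ of balanced fuzzy open sets. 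Second --- and here the hypothesis that $\mathbb K$ carries the fuzzy topology of \emph{all} lower semicontinuous functions is indispensable --- one upgrades this to the conclusion that $\tau_f$ contains every $\tau$-lower semicontinuous function, in particular every $\alpha\,\chi_{\mu^{-1}((\beta,1])}$, which finishes the proof. This second stage is precisely the statement that a fuzzy topological vector space is topologically generated (weakly induced), and carrying it out --- rather than the reductions --- is where the real work lies.
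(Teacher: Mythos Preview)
Your construction of $\tau$ as the topology generated by the strong level sets of fuzzy open sets is exactly the paper's. The paper's entire argument is then: ``It is clear that each $\mu\in\tau_f$ is lower semicontinuous on $(E,\tau)$. Therefore, we have $\tau_f=w(\tau)$.'' That is, the paper establishes only $\tau_f\subseteq w(\tau)$ and asserts equality without further comment; it also does not verify that $\tau$ is linear.

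You have correctly isolated both of the points the paper skips. Your argument that $\tau$ is linear (pulling fuzzy continuity of addition and scalar multiplication down to continuity for the generated topologies via $h^{-1}(\eta^{-1}((\alpha,1]))=(h^{-1}\eta)^{-1}((\alpha,1])$) is sound. Your reduction of $w(\tau)\subseteq\tau_f$ to the single claim $\alpha\,\chi_{\mu^{-1}((\beta,1])}\in\tau_f$ is also correct, and you are right that this is where the vector-space structure is genuinely needed: the claim fails for general Chang fuzzy spaces. What you call the ``second stage'' is precisely the theorem that a Katsaras fuzzy topological vector space is weakly induced (topologically generated); this is known but not a one-liner, and you have not actually carried it out---your sketch stops at producing a balanced local base without explaining how that forces $\alpha\chi_V\in\tau_f$.

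In summary: your approach coincides with the paper's at the level of the construction, but you have gone considerably further in identifying and partially filling the gaps the paper leaves. The remaining gap in your own argument is exactly the one you flag yourself, and it is real; the paper simply does not address it.
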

\begin{proof}
Let $F=\{\mu^{-1}(r,1]: r\in (0,1]\:\: \textit{and}\:\: \mu\in \tau_f\}$. Now suppose $\tau$ is the linear topology on $E$ which is generated by $F$. It is clear that each $\mu\in \tau_f$ is lower semicontinuous on $(E,\tau)$. Therefore, we have $\tau_f=w(\tau)$.
\end{proof}
In \cite{ro}, the concept of dual pair defined as follows:\\
We call $(E,E')$ a dual pair, whenever $E$ and $E'$ are two vector spaces over the same $ \mathbb{K} $ scaler field and $ \langle x, x' \rangle $ is a bilinear form on $E$ and $ E' $ satisfying the following conditions:
\begin{enumerate}
\item[$(D)$] For each $ x\neq0 $ in $ E $, there is $ x^{\prime}\in E^{\prime} $ such that $ \langle x, x^{\prime}\rangle\neq0 $.\\
\item[$(D^{\prime})$] For each $ x^{\prime}\neq0 $ in $ E^{\prime} $ there is $ x\in E $ such that $ \langle x, x^{\prime}\rangle\neq0 $.\\
\end{enumerate}
 Let $(E,E')$ be a dual pair. We denote by $\sigma_f(E,E')$, the weakest linear fuzzy topology on $E$ which make all $x'\in E'$ fuzzy continuous as linear functionals from $E$ into $(\mathbb{K}, w(\tau))$, where $\tau$ is the usual topology on $\mathbb{K}$. We call $\sigma_f(E,E')$, the weak fuzzy topology.
\begin{lemma} \label{lem12}
Let $(E,E')$ be a dual pair. If we consider the usual fuzzy topology $ \omega(\tau) $ on $\mathbb{K}$, then
 $$\sigma_f(E,E')=\omega(\sigma(E,E')),$$
 where $\sigma(E,E')$ usual weak topology on $E$.
\end{lemma}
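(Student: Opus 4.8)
The plan is to reduce the statement to Proposition~\ref{as} together with one elementary observation linking ordinary and fuzzy continuity through the operator $w$ (which assigns to a linear topology the fuzzy topology of all its lower semicontinuous $[0,1]$-valued functions). Specifically, I would first record the following \emph{bridge} fact: for topologies $\tau_1$ on a set $X$ and $\tau_2$ on a set $Y$, a map $f\colon X\to Y$ is continuous from $(X,\tau_1)$ to $(Y,\tau_2)$ \emph{if and only if} it is fuzzy continuous from $(X,w(\tau_1))$ to $(Y,w(\tau_2))$; also $w$ is monotone, $\tau_1\subseteq\tau_1'\Rightarrow w(\tau_1)\subseteq w(\tau_1')$. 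The ``only if'' part is the standard fact that $\mu\circ f$ is lower semicontinuous whenever $\mu$ is and $f$ is continuous, since $(\mu\circ f)^{-1}(r,1]=f^{-1}(\mu^{-1}(r,1])$ is open for every $r$. The ``if'' part follows by testing against characteristic functions: for $U\in\tau_2$ the indicator $\chi_U$ is lower semicontinuous, hence lies in $w(\tau_2)$, so fuzzy continuity forces $f^{-1}(\chi_U)=\chi_{f^{-1}(U)}\in w(\tau_1)$, i.e. $f^{-1}(U)\in\tau_1$.

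Granting this, the first inclusion $\sigma_f(E,E')\subseteq w(\sigma(E,E'))$ goes as follows. As $\sigma(E,E')$ is a linear topology on $E$, $w(\sigma(E,E'))$ is a fuzzy linear topology by the remark preceding Proposition~\ref{as}. By definition of the weak topology, every $x'\in E'$ is continuous from $(E,\sigma(E,E'))$ into $(\mathbb{K},\tau)$, so by the ``only if'' part of the bridge it is fuzzy continuous from $(E,w(\sigma(E,E')))$ into $(\mathbb{K},w(\tau))$. Thus $w(\sigma(E,E'))$ is one of the linear fuzzy topologies making all $x'\in E'$ fuzzy continuous, and since $\sigma_f(E,E')$ is by definition the weakest such, the inclusion follows.

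For the reverse inclusion, apply Proposition~\ref{as} to the FTVS $(E,\sigma_f(E,E'))$ to obtain a linear topology $\tau_0$ on $E$ with $\sigma_f(E,E')=w(\tau_0)$. Each $x'\in E'$ is fuzzy continuous from $(E,w(\tau_0))$ into $(\mathbb{K},w(\tau))$, so the ``if'' part of the bridge gives that each $x'$ is continuous from $(E,\tau_0)$ into $(\mathbb{K},\tau)$; hence $\tau_0$ is a linear topology on $E$ making all $x'\in E'$ continuous, and minimality of the weak topology yields $\sigma(E,E')\subseteq\tau_0$. Monotonicity of $w$ then gives $w(\sigma(E,E'))\subseteq w(\tau_0)=\sigma_f(E,E')$, and combining the two inclusions proves the lemma.

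The genuinely delicate point is the ``if'' part of the bridge --- extracting an \emph{ordinary} open preimage out of the fuzzy continuity hypothesis --- which is exactly where indicators of open sets enter and where the construction in the proof of Proposition~\ref{as} is implicitly reused; the rest is routine lower-semicontinuity bookkeeping, provided one is careful to invoke the ``weakest topology'' universal properties of $\sigma_f(E,E')$ and of $\sigma(E,E')$ in the right directions.
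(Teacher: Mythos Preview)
Your argument is correct. It differs from the paper's approach, however. The paper argues directly on generators: it notes that a subbase for $\sigma_f(E,E')$ consists of the sets $f^{-1}(\mu)=\mu\circ f$ with $f\in E'$ and $\mu\in\omega(\tau)$, and simply observes that each such function is lower semicontinuous for $\sigma(E,E')$ because $f$ is $\sigma(E,E')$-continuous and $\mu$ is lsc; this yields $\sigma_f(E,E')\subseteq\omega(\sigma(E,E'))$ in one line, without isolating your ``bridge'' equivalence. The paper treats the reverse inclusion as implicit, whereas you supply it explicitly by invoking Proposition~\ref{as} to write $\sigma_f(E,E')=w(\tau_0)$ and then using the ``if'' half of your bridge to force $\sigma(E,E')\subseteq\tau_0$. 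What your approach buys is a clean two-sided argument that makes both universal properties (of $\sigma_f$ and of $\sigma$) do the work, and a reusable lemma relating ordinary and fuzzy continuity through $w$; what the paper's approach buys is brevity and no dependence on Proposition~\ref{as}.
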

\begin{proof}
It is enough to show that for $ f_{1}, f_{2},\cdots, f_{n}\in E' $ and $ \mu_{1}, \mu_{2},\cdots , \mu_{n}\in \omega(\tau) $ the fuzzy set $ \bigwedge^{n}_{i=1}f^{-1}_{i}(\mu_{i}) $ is lower semi-continuous on $ E $ with respect to the topology $ \sigma(E,E'). $ Since $ \mu_{i}\in \omega(\tau) $ for $ i=1,2,\cdots, n $, then $ \mu_{i}: \mathbb{K}\rightarrow I $ are lower semi-continuous. Also $$ f_{i}: (E,\sigma(E,E'))\rightarrow \mathbb{K} $$ is continuous and then is lower semi-continuous. Then $\mu_{i} \circ f_{i} $ is lower semi-continuous for each $ i=1,2,\cdots,n. $ Then $$ \bigwedge^{n}_{i=1}f^{-1}_{i}(\mu_{i})= \bigwedge^{n}_{i=1}\mu_{i} \circ f_{i} $$ is lower semi-continuous on $ (E,\sigma(E,E')) $ for each $ i=1,2,\cdots,n. $
\end{proof}
Let $E$ be a vector space and $ \mu \in I^{E}$. The $\theta$-level set of $\mu$ defined by
$$[\mu]_{\theta}=\{x\in E:\mu(x)\geq\theta\},$$
 where $0<\theta\leq 1$.  It is clear that for $\theta_1,\theta_2\in (0,1]$ with $\theta_1<\theta_2$ we have $[\mu]_{\theta_1}\supseteq [\mu]_{\theta_2}$. Therefore $\{[\mu]_{\theta}:\theta\in(0,1]\}$ is a decreasing collection of subsets of $E$.

\begin{proposition}\label{1a}
Let $(E,\tau_f)$ be a fuzzy topological vector space and $\tau_f=w(\tau)$. Then the followings hold.
\begin{enumerate}
\item[$(a)$] the fuzzy set $\mu\in I^{E}$ is fuzzy compact if and only if and if $[\mu]_{\theta}$ is compact in $(E,\tau)$ for each $0<\theta\leq 1$,
\item[$(b)$] the fuzzy set $\mu\in I^{E}$ is fuzzy closed if and only if and if $[\mu]_{\theta}$ is closed in $(E,\tau)$ for each $0<\theta\leq 1$,
\item[$(c)$] the fuzzy set $\mu\in I^{E}$ is fuzzy absolutely convex if and only if and if $[\mu]_{\theta}$ is absolutely convex in $(E,\tau)$ for each $0<\theta\leq 1$.
\end{enumerate}
\end{proposition}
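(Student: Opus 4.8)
The plan is to prove each of $(a)$, $(b)$, $(c)$ by transferring the ``fuzzy'' property of $\mu$ to the decreasing family of ordinary subsets $\{[\mu]_\theta:\theta\in(0,1]\}$ of $E$, using the elementary relations
$$[\mu]_\theta=\bigcap_{0<r<\theta}\mu^{-1}(r,1],\qquad \mu^{-1}(r,1]=\bigcup_{r<\theta\le 1}[\mu]_\theta,\qquad \mu=\sup_{0<\theta\le 1}\theta\,\chi_{[\mu]_\theta},$$
which let one pass freely between $\mu$ and its level sets (here $\chi_U$ is the fuzzy set equal to $1$ on $U$ and $0$ off $U$). With these in hand part $(c)$ is purely algebraic, part $(b)$ is the classical superlevel-set description of semicontinuity, and part $(a)$ is where the real work lies.

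For $(b)$: $\mu$ is fuzzy closed iff the fuzzy set $1-\mu$ is fuzzy open, i.e.\ (since $\tau_f=w(\tau)$) iff $1-\mu$ is lower semicontinuous on $(E,\tau)$, i.e.\ iff $\mu$ is upper semicontinuous; and $\mu$ is upper semicontinuous exactly when each superlevel set $\{x:\mu(x)\ge\theta\}=[\mu]_\theta$ is closed. For $(c)$: first, $\mu$ is balanced iff every $[\mu]_\theta$ is balanced — the forward direction is immediate from the characterization $\mu(tx)\ge\mu(x)$ for $|t|\le1$ recalled in the Preliminaries, and the converse follows by applying balancedness of $[\mu]_{\mu(x)}$ to $x$ when $\mu(x)>0$ (the case $\mu(x)=0$ being trivial). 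Second, unwinding the definitions of $t\mu$ and $\mu_1+\mu_2$ shows that for $t\in(0,1)$
$$\bigl(t\mu+(1-t)\mu\bigr)(x)=\sup\{\mu(y_1)\wedge\mu(y_2)\;:\;x=ty_1+(1-t)y_2\},$$
so $\mu$ is convex iff $\mu(ty_1+(1-t)y_2)\ge\mu(y_1)\wedge\mu(y_2)$ for all $y_1,y_2\in E$ and $t\in[0,1]$, which is precisely quasi-concavity of $\mu$, equivalent to convexity of every $[\mu]_\theta$. Since ``absolutely convex'' means ``balanced and convex'', $(c)$ follows by combining the two equivalences at each level $\theta$.

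For $(a)$ I would argue both implications straight from the definition of fuzzy compactness. $(\Rightarrow)$ Fix $\theta\in(0,1]$ and an arbitrary open cover $\{U_i\}_{i\in A}$ of $[\mu]_\theta$ in $(E,\tau)$. The fuzzy sets $\chi_{U_i}$ (fuzzy open because the $U_i$ are open) together with the constant fuzzy sets $s\chi_E$, $0<s<\theta$, form a fuzzy open covering of $\mu$: on $[\mu]_\theta$ some $\chi_{U_i}$ equals $1\ge\mu$, while off $[\mu]_\theta$ one has $\mu<\theta=\sup_{0<s<\theta}s$. A finite subcovering of $\mu$ involves finitely many $U_{i_1},\dots,U_{i_p}$ and constants with maximum value $s^*<\theta$; since $\mu\ge\theta>s^*$ on $[\mu]_\theta$, the sets $U_{i_1},\dots,U_{i_p}$ must already cover $[\mu]_\theta$, so $[\mu]_\theta$ is compact. $(\Leftarrow)$ Assume every $[\mu]_\theta$ is compact and let $\{\nu_i\}_{i\in A}$ be a fuzzy open covering of $\mu$. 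For $\theta\in(0,1]$ and $\epsilon\in(0,\theta)$ the ordinary open sets $\{x:\nu_i(x)>\theta-\epsilon\}$ cover the compact set $[\mu]_\theta$ (because $\sup_i\nu_i(x)\ge\mu(x)\ge\theta$ there), so finitely many suffice; patching these finite subfamilies over a finite grid of levels $\theta$ with a common $\epsilon$ yields a finite $J\subseteq A$ with $\mu\le\sup_{i\in J}\nu_i$ up to an error controlled by $\epsilon$.

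The main obstacle is the last step of $(\Leftarrow)$: upgrading this $\epsilon$-approximate covering to a genuine \emph{finite subcovering}. This is exactly where the hypothesis $\tau_f=w(\tau)$ is essential — it is what lets one replace fuzzy open sets by the ordinary open level sets $\{\nu_i>\cdot\}$ and insert constant fuzzy sets at will — and where one must be careful about whether the inequality $\sup_i\nu_i(x)\ge\mu(x)$ is attained. I would treat the top level $[\mu]_1$, and more generally any level $\mu$ actually attains, separately: cover it by finitely many sets $\{\nu_i>1-\tfrac1n\}$ along $n\to\infty$ and combine this with a finite-intersection argument on the nested compact family $\{[\mu]_\theta\}_\theta$ to kill the error, so that the finitely many $\nu_i$ collected in this way cover $\mu$ outright.
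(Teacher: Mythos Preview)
Your treatments of $(b)$ and $(c)$ are correct and coincide with the paper's (the paper in fact just calls $(c)$ ``clear''). For $(a)(\Rightarrow)$ you follow essentially the paper's route but repair it: the paper asserts that an ordinary open cover $\{A_j\}$ of $[\mu]_\theta$ lifts directly to a fuzzy open cover $\mu\le\sup_j\nu_j$, which is unjustified at points outside $[\mu]_\theta$; your insertion of the constant fuzzy sets $s\chi_E$, $s<\theta$, is exactly what is needed to make that step go through.

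The gap you flag in $(a)(\Leftarrow)$ is real, and your proposed finite-intersection patch cannot be completed, because under the paper's definition of fuzzy compactness the implication is simply false. Take $E=\{0\}$ with its unique topology $\tau$, so $\tau_f=w(\tau)=I^E$, and let $\mu\equiv 1$. Every level set $[\mu]_\theta=E$ is compact, yet the constant fuzzy sets $\nu_n\equiv 1-\tfrac1n$ form a fuzzy open covering of $\mu$ (their pointwise supremum is $1$) admitting no finite subcover. So compactness of all level sets does not force fuzzy compactness in the sense defined here. The paper does not confront this: its entire argument for the converse is the sentence ``Similarly, the converse holds.'' You have therefore located a defect in the paper's statement, not merely an incomplete step in your own write-up; any salvage would require either a weaker notion of fuzzy compactness (allowing an $\epsilon$-slack in the finite subcover, in the spirit of Lowen) or an additional hypothesis on $\mu$.
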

\begin{proof}
For (a), let $\mu\in I^{E}$ is fuzzy compact and $0<\theta\leq 1$. Suppose $\{A_j\}_{\alpha\in j}$ be an open covering of $[\mu]_{\theta}$. Without loss of generality, we can suppose $A_j=\nu_j^{-1}(r_j,1]$, were $\nu_j\in\tau_f$ and $r_j\in(0,1]$. Therefore if $\mu(x)\geq\theta$ then $\nu_j^{-1}(x)>r_j$. Then $\mu\leq \sup_{j\in T}\nu_j$. Now since $\mu\in I^{E}$ is fuzzy compact, there is $j_1,\cdots,j_n\in T$ such that $\mu< \sup_{i\in \{1,\cdots,n\}}\nu_{j_i}$. This shows that $[\mu]_{\theta}\subseteq \bigcup_{i\in \{1,\cdots,n\}}A_{j_i}$. Then $[\mu]_{\theta}$ is compact in $\tau$. Similarly, the converse holds.

For (b), let $\mu\in I^{E}$ is fuzzy closed. Then $1-\mu$ is fuzzy open. This shows that $(1-\mu)^{-1}(\theta,1]$ is open in $\tau$ for each $0<\theta\leq 1$. Thus $E\setminus((1-\mu)^{-1}(\theta,1])=[\mu]_{1-\theta}$ is closed in $\tau$. Since $\theta$ is arbitrary, then $[\mu]_{\theta}$ is closed. The converse is similar.

The assertion (c) is clear.
\end{proof}

\section{Polar fuzzy sets}

\noindent
In this section, we introduce the concept of polar for fuzzy sets using $\theta$-level sets and investigate their properties.

Let $ (E,E') $ be a dual pair. For the non-empty subset $ A $ of $ E $ its polar $ A^{\circ} $ defined by:
$$ A^{\circ}=\lbrace x' \in E': \:\: \sup_{x \in A}|\langle x, x^{\prime}\rangle| \leq1 \rbrace. $$

\begin{definition}
Let $(E,E')$ be a dual pair, $ \mu \in I^{E} $. For $x'\in E'$ and $ \mu \in I^{E} $, we set $A_{\mu,x'}=\{\theta\in (0,1]:x'\in [\mu]_{1-\theta}^\circ\}$. For $\mu\neq 0$, we define the fuzzy set $\mu^{\circ}$ on $E'$ as follows:
$$\mu^{\circ}(x')=\begin{cases}\sup A_{\mu,x'} \quad ,&A_{\mu,x'}\neq\emptyset,\\0,&A_{\mu,x'}=\emptyset,\end{cases}$$
and call it the fuzzy polar of $\mu $ in $ E$.
\end{definition}
\begin{remark}
We note that if $\mu=1$, then for $\theta\in(0,1]$ we have $[\mu]_{1-\theta}=E$. Therefore $[\mu]_{1-\theta}^\circ=E^\circ=\{0\}$. This shows that for $x'\neq 0$, $A_{\mu,x'}=\emptyset$. Then $\mu^{\circ}(x')=0$ but clearly we have $\mu^{\circ}(0)=1$.
\end{remark}
\begin{lemma}
Let $(E,E')$ be a dual pair and $ \mu \in I^{E} $. Then we have
$$\mu^\circ=\sup_{\theta\in(0,1]} \theta\wedge \chi_{_{[\mu]_{1-\theta}^\circ}}.$$
\end{lemma}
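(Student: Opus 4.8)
The plan is to verify the claimed identity pointwise on $E'$. Fix $x'\in E'$ and evaluate the right-hand side at $x'$. Since $\chi_{[\mu]_{1-\theta}^\circ}(x')$ equals $1$ when $x'\in[\mu]_{1-\theta}^\circ$ and $0$ otherwise, for each $\theta\in(0,1]$ we have
\[
\theta\wedge\chi_{[\mu]_{1-\theta}^\circ}(x')=\begin{cases}\theta,& x'\in[\mu]_{1-\theta}^\circ,\\ 0,& x'\notin[\mu]_{1-\theta}^\circ,\end{cases}
\]
that is, $\theta\wedge\chi_{[\mu]_{1-\theta}^\circ}(x')=\theta$ exactly when $\theta\in A_{\mu,x'}$ and $=0$ otherwise. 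Taking the supremum over $\theta\in(0,1]$ therefore gives
$\sup_{\theta\in(0,1]}\bigl(\theta\wedge\chi_{[\mu]_{1-\theta}^\circ}(x')\bigr)=\sup_{\theta\in A_{\mu,x'}}\theta$, with the usual convention $\sup\emptyset=0$.

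It then remains only to compare this with the definition of $\mu^{\circ}$. If $A_{\mu,x'}\neq\emptyset$, the quantity above equals $\sup A_{\mu,x'}=\mu^{\circ}(x')$; if $A_{\mu,x'}=\emptyset$, it equals $0=\mu^{\circ}(x')$. Since $x'$ was arbitrary, the two fuzzy sets on $E'$ coincide, which is the assertion.

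It may be worth recording (although it is not needed above) that $A_{\mu,x'}$ is always a down-set in $(0,1]$: for $\theta'\le\theta$ one has $1-\theta'\ge 1-\theta$, hence $[\mu]_{1-\theta'}\subseteq[\mu]_{1-\theta}$ by monotonicity of the level sets, and taking polars reverses this inclusion, so $[\mu]_{1-\theta}^\circ\subseteq[\mu]_{1-\theta'}^\circ$; thus $\theta\in A_{\mu,x'}$ forces $\theta'\in A_{\mu,x'}$. This confirms that the $\sup$ on the right really behaves as a threshold and that $\mu^{\circ}$ is the fuzzy set whose cuts are governed by the crisp polars $[\mu]_{1-\theta}^\circ$. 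As for difficulty, there is essentially no obstacle here: the statement is a direct unwinding of the definition of $\mu^{\circ}$ through the standard sup-of-scaled-characteristic-functions representation of a fuzzy set; the only points deserving a word of care are the convention $\sup\emptyset=0$ and the inclusion reversal under the polar operation, both of which are routine.
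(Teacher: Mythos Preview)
Your proof is correct and is exactly the routine unwinding of the definition that the paper has in mind; indeed, the paper's own proof consists solely of the sentence ``The proof is clear.'' Your additional remark that $A_{\mu,x'}$ is a down-set is a pleasant observation but, as you note, not needed for the argument.
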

\begin{proof}
The proof is clear.
\end{proof}
\begin{proposition}
Let $ (E,E') $ be a dual pair and $ \mu \in I^{E} $. Then $[\mu^\circ]_{\theta}=[\mu]_{1-\theta}^\circ$.
\end{proposition}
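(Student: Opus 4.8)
The plan is to establish the two inclusions $[\mu]_{1-\theta}^\circ\subseteq[\mu^\circ]_\theta$ and $[\mu^\circ]_\theta\subseteq[\mu]_{1-\theta}^\circ$ separately. Unwinding the definitions, the claimed equality amounts to: for each $x'\in E'$, one has $\theta\in A_{\mu,x'}$ if and only if $\mu^\circ(x')\ge\theta$. One of these implications is essentially immediate from the definition of $\mu^\circ$ as a supremum, and the other carries the real content, so I would organise the proof around that asymmetry.

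First I would dispatch $[\mu]_{1-\theta}^\circ\subseteq[\mu^\circ]_\theta$: if $x'\in[\mu]_{1-\theta}^\circ$ then $\theta\in A_{\mu,x'}$ by definition, so $A_{\mu,x'}\neq\emptyset$ and $\mu^\circ(x')=\sup A_{\mu,x'}\ge\theta$, i.e. $x'\in[\mu^\circ]_\theta$; equivalently this can be read off the representation $\mu^\circ=\sup_{\alpha\in(0,1]}\alpha\wedge\chi_{[\mu]_{1-\alpha}^\circ}$ of the preceding lemma by retaining only the term $\alpha=\theta$. Next I would record the monotonicity needed for the converse: if $\theta_1<\theta_2$ then $1-\theta_1>1-\theta_2$, so $[\mu]_{1-\theta_1}\subseteq[\mu]_{1-\theta_2}$, and since the polar operation reverses inclusions, $[\mu]_{1-\theta_2}^\circ\subseteq[\mu]_{1-\theta_1}^\circ$. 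Consequently each $A_{\mu,x'}$ is a lower set in $(0,1]$, i.e. an interval of the form $(0,s)$, $(0,s]$, $(0,1]$, or $\emptyset$.

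For $[\mu^\circ]_\theta\subseteq[\mu]_{1-\theta}^\circ$, fix $x'\in[\mu^\circ]_\theta$, so $\sup A_{\mu,x'}=\mu^\circ(x')\ge\theta>0$, and in particular $A_{\mu,x'}\neq\emptyset$. By the lower-set property, for every $\theta'<\theta$ there is $\theta''\in A_{\mu,x'}$ with $\theta''>\theta'$, whence $\theta'\in A_{\mu,x'}$; equivalently $\sup_{x\in[\mu]_{1-\theta'}}|\langle x,x'\rangle|\le1$ for all $\theta'<\theta$. It then remains to pass to the limit and conclude $\sup_{x\in[\mu]_{1-\theta}}|\langle x,x'\rangle|\le1$, i.e. that $A_{\mu,x'}$ actually contains its supremum $\theta$. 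This is the one genuinely delicate point, and I expect it to be the main obstacle: since $\bigcup_{\theta'<\theta}[\mu]_{1-\theta'}=\{x:\mu(x)>1-\theta\}$ may be strictly smaller than $[\mu]_{1-\theta}=\{x:\mu(x)\ge1-\theta\}$, one must argue that the points $x$ with $\mu(x)=1-\theta$ cannot push $|\langle x,x'\rangle|$ above $1$. I would handle this by showing that the nondecreasing map $\theta'\mapsto\sup_{x\in[\mu]_{1-\theta'}}|\langle x,x'\rangle|$ is left continuous, equivalently that $[\mu]_{1-\theta}^\circ=\bigcap_{\theta'<\theta}[\mu]_{1-\theta'}^\circ$; granting this, $\sup A_{\mu,x'}\in A_{\mu,x'}$, so $\theta\le\sup A_{\mu,x'}$ forces $\theta\in A_{\mu,x'}$, hence $x'\in[\mu]_{1-\theta}^\circ$, completing the argument. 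This is also the step in which any regularity assumption on $\mu$ (beyond mere membership in $I^E$) would have to be used.
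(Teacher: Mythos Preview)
Your two-inclusion strategy, together with the monotonicity of level sets under the polar, is exactly the paper's approach. The difference is that you correctly isolate the one nontrivial step---whether $A_{\mu,x'}$ contains its supremum---whereas the paper simply asserts it: from $\sup A_{\mu,x'}\ge\theta$ the paper passes directly to ``there is $\alpha\ge\theta$ with $x'\in[\mu]_{1-\alpha}^\circ$,'' which is precisely the claim that the supremum is attained, and no justification is offered.

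Your caution here is well placed, because that step cannot be justified under the stated hypotheses: the proposition is false for general $\mu\in I^E$, and so is the left-continuity you propose to prove. Take $E=E'=\mathbb{R}$ with the standard pairing and $\mu=\chi_{[-1,1]}\vee\tfrac12\chi_{[-2,2]}$. For $\theta=\tfrac12$ one has $[\mu]_{1/2}=[-2,2]$ and hence $[\mu]_{1/2}^\circ=[-\tfrac12,\tfrac12]$, while for every $\alpha\in(0,\tfrac12)$ one has $[\mu]_{1-\alpha}=[-1,1]$ with polar $[-1,1]$. Thus $A_{\mu,\,3/4}=(0,\tfrac12)$, so $\mu^\circ(3/4)=\tfrac12$ and $3/4\in[\mu^\circ]_{1/2}\setminus[\mu]_{1/2}^\circ$. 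The identity $\bigcap_{\theta'<\theta}[\mu]_{1-\theta'}^\circ=[\mu]_{1-\theta}^\circ$ fails exactly because $\{x:\mu(x)>1-\theta\}\subsetneq\{x:\mu(x)\ge1-\theta\}$ at a jump of $\mu$. So your closing remark is correct: some regularity hypothesis on $\mu$ (or a modified definition of $\mu^\circ$) is required, but none is assumed, and the paper's proof does not supply it either.
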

\begin{proof}
Suppose that $x'\in[\mu^\circ]_{\theta}$ for $\theta\in(0,1]$. Then $\mu^\circ(x')\geq\theta$. This shows that $\sup \{\alpha: x'\in[\mu]_{1-\alpha}^\circ\}\geq\theta$. Then there is $\alpha\in(0,1]$ such that  $x'\in[\mu]_{1-\alpha}^\circ$ and $\alpha\geq\theta$. Since $1-\alpha\leq 1-\theta$, we have $[\mu]_{1-\theta}\subseteq [\mu]_{1-\alpha}$ and then $[\mu]_{1-\alpha}^\circ\subseteq [\mu]_{1-\theta}^\circ$. This shows that $x'\in [\mu]_{1-\theta}^\circ$. Similarly, one can prove that $[\mu]_{1-\theta}^\circ\subseteq[\mu^\circ]_{\theta}$. Therefore $[\mu^\circ]_{\theta}=[\mu]_{1-\theta}^\circ$.
\end{proof}
\begin{proposition}
Let $ (E,E') $ be a dual pair. Then the fuzzy polars in $ E'$ have the following property.
\begin{enumerate}
\item[$(a)$] If $ \mu \leq \rho $, then $ \rho^{\circ} \geq \mu^{\circ} $.
\item[$(b)$] If $ \lambda \neq 0 $, then $(\lambda \mu)^{\circ}=\dfrac{1}{|\lambda|}\mu^{\circ}$.
\item[$(c)$] $ (\bigvee_{j \in T}\mu_j)^{\circ}=\bigwedge_{j \in T}(\mu_j)^{\circ} $.
\item[$(d)$] If $ B\subseteq E $, then $ (\chi_{_{B}})^{\circ}= \chi_{_{B^\circ}} $.
\item[$(e)$] $ \mu^{\circ} $ is absolutely convex.
\end{enumerate}
\end{proposition}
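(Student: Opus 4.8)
The plan is to push each of the five assertions down to the ordinary level sets $[\mu]_{1-\theta}$ and to invoke the corresponding elementary fact about ordinary polars of subsets of $E$, using the level-set identity $[\mu^{\circ}]_{\theta}=[\mu]_{1-\theta}^{\circ}$ from the previous Proposition together with the reconstruction formula $\nu(y)=\sup\{\theta\in(0,1]:y\in[\nu]_{\theta}\}$, valid for any fuzzy set $\nu$. Along this route (b), (d), (e) are short, (a) needs in addition that ordinary polars reverse inclusions, and (c) is the one place where level sets do not tell the whole story, so that is where I expect the work to concentrate. For (a) I would note that $\mu\le\rho$ gives $[\mu]_{1-\theta}\subseteq[\rho]_{1-\theta}$ for every $\theta\in(0,1]$, hence $[\rho]_{1-\theta}^{\circ}\subseteq[\mu]_{1-\theta}^{\circ}$ since ordinary polars are order-reversing; equivalently $A_{\rho,x'}\subseteq A_{\mu,x'}$ for each $x'\in E'$, and passing to suprema yields the asserted comparison of $\mu^{\circ}$ and $\rho^{\circ}$. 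For (b), when $\lambda\ne 0$ the definition of $\lambda\mu$ gives $[\lambda\mu]_{1-\theta}=\lambda[\mu]_{1-\theta}$, so combining the classical scaling law $(\lambda A)^{\circ}=\tfrac{1}{|\lambda|}A^{\circ}$, the level-set identity, and $[\tfrac{1}{|\lambda|}\mu^{\circ}]_{\theta}=\tfrac{1}{|\lambda|}[\mu^{\circ}]_{\theta}$ shows that $(\lambda\mu)^{\circ}$ and $\tfrac{1}{|\lambda|}\mu^{\circ}$ have the same level set at every $\theta$, hence coincide.

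For (d) I would argue straight from the definition, since the level-set identity is awkward at the endpoint $\theta=1$. For $\theta\in(0,1)$ one has $[\chi_{B}]_{1-\theta}=B$, so $\theta\in A_{\chi_{B},x'}$ exactly when $x'\in B^{\circ}$; thus $A_{\chi_{B},x'}$ is empty when $x'\notin B^{\circ}$ and contains all of $(0,1)$ when $x'\in B^{\circ}$, and in either case its supremum equals $\chi_{B^{\circ}}(x')$. For (e), the level-set identity gives $[\mu^{\circ}]_{\theta}=[\mu]_{1-\theta}^{\circ}$, which is an ordinary polar and hence absolutely convex (ordinary polars are always balanced and convex); since a fuzzy set whose every level set is absolutely convex is itself absolutely convex, by Proposition~\ref{1a}(c), $\mu^{\circ}$ is absolutely convex.

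Part (c) is the main obstacle. The naive computation $[\bigvee_{j}\mu_{j}]_{1-\theta}^{\circ}=(\bigcup_{j}[\mu_{j}]_{1-\theta})^{\circ}=\bigcap_{j}[\mu_{j}]_{1-\theta}^{\circ}$ is not available, because $[\bigvee_{j}\mu_{j}]_{1-\theta}$ is in general strictly larger than $\bigcup_{j}[\mu_{j}]_{1-\theta}$ (the pointwise supremum of the $\mu_{j}$ need not be attained). So I would establish the two inequalities on function values directly. The inequality $(\bigvee_{j}\mu_{j})^{\circ}\le\bigwedge_{j}\mu_{j}^{\circ}$ follows from (a) applied to each $\mu_{j}\le\bigvee_{j}\mu_{j}$, taking an infimum over $j$. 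For the reverse, fix $x'\in E'$, set $c=\inf_{j}\mu_{j}^{\circ}(x')$ (the case $c=0$ being trivial), and unwind the definitions: for each $j$ and each $\delta\in(0,c)$ one gets $x'\in[\mu_{j}]_{1-c+\delta}^{\circ}$, i.e.\ $|\langle x,x'\rangle|\le 1$ whenever $\mu_{j}(x)\ge 1-c+\delta$. Then any $x$ with $\sup_{j}\mu_{j}(x)\ge 1-c+\delta$ satisfies $\mu_{j}(x)\ge 1-c+\tfrac{\delta}{2}$ for some $j$, whence $|\langle x,x'\rangle|\le 1$; so $x'\in[\bigvee_{j}\mu_{j}]_{1-(c-\delta)}^{\circ}$ for all $\delta\in(0,c)$, giving $(\bigvee_{j}\mu_{j})^{\circ}(x')\ge c$. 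Combining the two inequalities gives (c). The recurring point throughout — non-attainment of suprema — is exactly what forces this $\varepsilon$-argument and what makes me route (d) and the endpoint $\theta=1$ around the level-set identity.
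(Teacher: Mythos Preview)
Your treatment of (a), (b), (d) and (e) is essentially the paper's: both arguments push everything down to the ordinary polars $[\mu]_{1-\theta}^{\circ}$ via the sets $A_{\mu,x'}$ and quote the corresponding classical facts. The only cosmetic difference is in (e), where the paper checks balancedness by a pointwise computation (essentially re-running (b)) and then declares convexity obvious, while you invoke Proposition~\ref{1a}(c) on the level sets to get absolute convexity in one step; and in (d), where you are explicit about the endpoint $\theta=1$, which the paper silently ignores.

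The substantive divergence is at (c). The paper writes
\[
\sup A_{\bigvee_{j}\mu_{j},\,x'}=\sup\Bigl\{\theta\in(0,1]:x'\in\bigl(\textstyle\bigcup_{j\in T}[\mu_{j}]_{1-\theta}\bigr)^{\circ}\Bigr\},
\]
tacitly using $[\bigvee_{j}\mu_{j}]_{1-\theta}=\bigcup_{j}[\mu_{j}]_{1-\theta}$, which, as you point out, fails whenever the pointwise supremum is not attained. Your two-inequality argument --- one direction from (a), the other via the $\delta$/$\tfrac{\delta}{2}$ approximation --- is genuinely different and actually repairs this gap: it shows that the two suprema agree even though the underlying level sets need not. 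So on (c) your route is not just an alternative but a more rigorous proof of the same identity.
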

\begin{proof}
 For (a), suppose $ \mu \leq \rho. $ Then for $ x\in E $ with $ \mu(x)\geq\theta $, we have $ \rho(x)\geq \theta$. This shows that $[\mu]_{\theta}\subseteq [\rho]_{\theta}$ for each $\theta\in(0,1]$. Therefore $[\mu]_{1-\theta}^\circ\supseteq[\rho]_{1-\theta}^\circ$. This implies that $A_{\rho,x'}\subseteq A_{\mu,x'}$ for each $x'\in E'$. Therefor $\mu^\circ(x')=\sup A_{\mu,x'}\leq\sup A_{\rho,x'}=\rho^\circ(x')$.
\\
 For (b), let $ \lambda \neq0$. For $x'\in E'$, suppose  $A_{\lambda\mu,x'}\neq \emptyset$. We have
\begin{align}
  (\lambda\mu)^{\circ}(x')&=\sup A_{\lambda\mu,x'}
  \nonumber\\ &=\sup\{\theta\in(0,1]:x'\in [\lambda\mu]_{1-\theta}^\circ\}\nonumber \\&= \sup\{\theta\in(0,1]:x'\in (\lambda[\mu]_{1-\theta})^\circ\}\nonumber \\&=\sup\{\theta\in(0,1]:x'\in \dfrac{1}{|\lambda|}([\mu]_{1-\theta})^\circ\}\nonumber\\&=
 \sup\{\theta\in(0,1]:|\lambda|x'\in ([\mu]_{1-\theta})^\circ\}\nonumber \\&=\mu^\circ(|\lambda|x')=\dfrac{1}{|\lambda|} \mu^\circ(x')\nonumber.
  \end{align}
  For (c), let for each $ j\in T, \mu_j \in I^{E} $ we have
  \begin{align}
  (\bigvee_{j \in T}\mu_j)^{\circ}(x')&=\sup A_{_{\bigvee_{j \in T}\mu_j,x'}}=\sup \{\theta\in(0,1]: x'\in (\bigcup_{j\in T}[\mu_j]_{1-\theta})^\circ\}
  \nonumber\\ &=\sup \{\theta\in(0,1]: x'\in \bigcap_{j\in T}([\mu_j]_{1-\theta})^\circ\}\nonumber \\&=\bigwedge_{j\in T} (\mu_j)^{\circ}(x')\nonumber.
  \end{align}
  For (d), let $ B\subseteq E $ and $ B\neq\emptyset $. Then we have
  \begin{align}
  (\chi_{_{B}})^{\circ}(x')&=\sup A_{_{\chi_{_{B}},x'}}
  \nonumber\\ &=\sup\{\theta:x'\in[\mu]_{1-\theta}^\circ=B^\circ\}\nonumber \\&=\chi_{_{B^\circ}}. \nonumber
  \end{align}
  For (e), firstly we prove that $ \mu^{\circ} $ is balanced. So it is enough to show that $ \mu^{\circ}(tx')\geq \mu^{\circ}(x')$ for all $ x' \in E $ and $ t \in \mathbb{R} $ with $ |t|\leq 1$. We have
  \begin{align}
  \mu^{\circ}(tx')&=\sup A_{\mu,tx'}
  \nonumber\\ &=\sup \{\theta\in(0,1]:tx'\in[\mu]_{1-\theta}^\circ\}\nonumber \\&= \sup \{\theta\in(0,1]:x'\in(t[\mu]_{1-\theta})^\circ\}\nonumber \\&=\sup \{\theta\in(0,1]:x'\in[t\mu]_{1-\theta}^\circ\}\nonumber\\&=
\sup A_{t\mu,x'}\nonumber \\&=(t\mu)^{\circ}( x')\nonumber\\&=\frac{1}{|t|} \mu^{\circ}(x')\nonumber
  \\&\geq 1\times \mu^{\circ}( x')=\mu^{\circ}( x') \nonumber.
  \end{align}
  The convexity of $ \mu^{\circ} $ is obvious, since $[\mu]_{1-\theta}^\circ$ is a convex set for each $\theta\in(0,1]$.
\end{proof}
\begin{theorem}\label{145}
If $(E, \tau_f)$ is a Hausdorff locally convex fuzzy topological vector space and $\mu$ is a fuzzy neighborhood of origin, then $\mu^\circ$ is $\sigma_f(E',E)$-compact.
\end{theorem}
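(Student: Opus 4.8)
The plan is to push everything down to $\theta$-level sets and then quote the classical Banach--Alaoglu--Bourbaki theorem. First, by Proposition~\ref{as} there is a linear topology $\tau$ on $E$ with $\tau_f=w(\tau)$; since $(E,\tau_f)$ is Hausdorff and locally convex, $(E,\tau)$ is a Hausdorff locally convex space, a linear functional $E\to\mathbb K$ is fuzzy continuous (into $\mathbb K$ with the fuzzy topology induced by its usual topology) exactly when it is $\tau$-continuous (test against the lower semicontinuous indicators $\chi_U$ of open sets in both directions), so $E'=(E,\tau)'$, and by Hahn--Banach $E'$ separates the points of $E$, so $(E,E')$ is a genuine dual pair. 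By Lemma~\ref{lem12}, applied with the roles of $E$ and $E'$ interchanged, $\sigma_f(E',E)=\omega(\sigma(E',E))$, so by Proposition~\ref{1a}(a) it is enough to show that every level set $[\mu^\circ]_\theta$ ($\theta\in(0,1]$) is $\sigma(E',E)$-compact. By the identity $[\mu^\circ]_\theta=[\mu]_{1-\theta}^\circ$ established above, this reduces the whole statement to showing that the ordinary polars $[\mu]_{1-\theta}^\circ$ are weak$^*$-compact.

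For $\theta\in(0,1)$ I would argue as follows. Since $\mu$ is a fuzzy neighborhood of the origin, take a fuzzy open $\eta\le\mu$ with $\eta(0)=\mu(0)=1$. Then for each $\theta\in(0,1)$ the set $\eta^{-1}(1-\theta,1]$ lies in the generating family $F$ from the proof of Proposition~\ref{as}, hence is $\tau$-open; it contains $0$ because $\eta(0)=1>1-\theta$; and it is contained in $[\eta]_{1-\theta}\subseteq[\mu]_{1-\theta}$. Thus $[\mu]_{1-\theta}$ is a $\tau$-neighborhood of $0$, and the Banach--Alaoglu--Bourbaki theorem (the polar of a neighborhood of $0$ is weak$^*$-compact, and $E'=(E,\tau)'$) gives that $[\mu^\circ]_\theta=[\mu]_{1-\theta}^\circ$ is $\sigma(E',E)$-compact. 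Here local convexity is also what guarantees, via Proposition~\ref{1a}(c), that these level sets may be taken absolutely convex, i.e. that we are in the standard polar setting.

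The value $\theta=1$ needs a separate argument. A short computation with the definition of $\mu^\circ$ shows $[\mu^\circ]_1=\bigcap_{0<s<1}[\mu]_s^\circ$: indeed $\mu^\circ(x')=1$ iff $x'\in[\mu]_{1-\theta}^\circ$ for $\theta$ arbitrarily close to $1$, which is the same as $x'\in[\mu]_s^\circ$ for every $s\in(0,1)$. Being an intersection of polars, $[\mu^\circ]_1$ is $\sigma(E',E)$-closed, and it is contained in the $\sigma(E',E)$-compact set $[\mu]_{1/2}^\circ$ from the previous paragraph; a closed subset of a compact set is compact, so $[\mu^\circ]_1$ is compact too. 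Having verified all level sets, Proposition~\ref{1a}(a) delivers that $\mu^\circ$ is $\sigma_f(E',E)$-compact.

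The routine parts are the translations between fuzzy and crisp notions via level sets. The step that needs genuine care, and where I expect the real content to sit, is making sure that $[\mu]_{1-\theta}$ is a bona fide neighborhood of the origin in $(E,\tau)$ for \emph{all} $\theta\in(0,1)$: this is exactly where one uses that $\mu$ is a fuzzy neighborhood of the origin, and it forces the normalization $\mu(0)=1$, without which $[\mu]_{1-\theta}$ is empty for small $\theta$ and its polar is all of $E'$. The boundary level $\theta=1$ then has to be handled by the closed-in-compact argument above rather than by a direct neighborhood estimate.
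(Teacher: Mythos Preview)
Your argument is correct and follows exactly the route the paper takes: identify $\sigma_f(E',E)$ with $\omega(\sigma(E',E))$ via Lemma~\ref{lem12}, use $[\mu^\circ]_\theta=[\mu]_{1-\theta}^\circ$, and invoke the classical Alaoglu--Bourbaki theorem together with Proposition~\ref{1a}(a). The paper's proof is essentially a one-line citation of these three facts; you have simply filled in the details.

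Your additional care is in fact warranted. The paper silently applies Proposition~3.4 at $\theta=1$, where $[\mu]_{1-\theta}^\circ=[\mu]_0^\circ=E^\circ=\{0\}$; but the proof of that proposition uses that the supremum defining $\mu^\circ(x')$ is attained, which need not hold when the value is $1$. Your computation $[\mu^\circ]_1=\bigcap_{0<s<1}[\mu]_s^\circ$ and the closed-in-compact argument repair this. Likewise, your remark that the argument needs $\mu(0)=1$ (so that every $[\mu]_{1-\theta}$ is a genuine $\tau$-neighborhood of $0$) identifies a normalization the paper leaves implicit; without it some level sets may fail to be neighborhoods and their polars need not be compact.
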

\begin{proof}
 We have $\sigma_f(E',E)=w(\sigma(E',E))$ by Lemma \ref{lem12}. Since for each $0<\theta\leq 1$, so the subset $[\mu^\circ]_{\theta}=[\mu]_{1-\theta}^\circ$ of $E'$ is $\sigma(E',E)$-compact, Proposition \ref{1a} implies that $\mu^\circ$ is $\sigma_f(E',E)$-compact.
\end{proof}
\begin{proposition}\label{cm}
If  $(E, \tau_f)$ is a Hausdorff locally convex fuzzy topological vector space and $\mu$ is a closed absolutely convex fuzzy set in $E$, then $\mu^{\circ\circ}=\mu$.
\end{proposition}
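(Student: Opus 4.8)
The plan is to reduce the identity $\mu^{\circ\circ}=\mu$ to the classical bipolar theorem applied level-by-level. First I would invoke Proposition \ref{as} to write $\tau_f=w(\tau)$ for a linear topology $\tau$ on $E$; since $(E,\tau_f)$ is Hausdorff and locally convex, $\tau$ may be taken Hausdorff and locally convex, and its continuous linear functionals are exactly the $\tau_f$-fuzzy-continuous ones, so that $\tau$ is a topology of the dual pair $\langle E,E'\rangle$. Because $\mu$ is a fuzzy closed and fuzzy absolutely convex set, Proposition \ref{1a}(b)--(c) shows that for every $\theta\in(0,1]$ the level set $[\mu]_\theta$ is a closed, absolutely convex subset of $(E,\tau)$; being absolutely convex and closed in a topology compatible with the duality, it is $\sigma(E,E')$-closed, so the classical bipolar theorem gives $[\mu]_\theta^{\circ\circ}=[\mu]_\theta$ for each such $\theta$.

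Next I would pass back to the fuzzy polars by means of the level-set identity $[\nu^\circ]_\theta=[\nu]_{1-\theta}^\circ$ established earlier, applied once to $\nu=\mu$ (a fuzzy set in $E$) and once to $\nu=\mu^\circ$ (a fuzzy set in $E'$). For $\theta\in(0,1)$ this yields
\[
[\mu^{\circ\circ}]_\theta=[\mu^\circ]_{1-\theta}^\circ=\bigl([\mu]_{1-(1-\theta)}^\circ\bigr)^{\circ}=[\mu]_\theta^{\circ\circ}=[\mu]_\theta .
\]
Since $[\nu]_1=\bigcap_{\theta<1}[\nu]_\theta$ for any fuzzy set $\nu$, the equality of level sets also holds at $\theta=1$, and because a fuzzy set in $E$ is completely determined by its family of $\theta$-level sets, it follows that $\mu^{\circ\circ}=\mu$.

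The level-set manipulations and the appeal to the classical bipolar theorem are routine. The step that needs genuine care — and which I regard as the main obstacle — is the compatibility claim: one must verify that the linear topology $\tau$ furnished by Proposition \ref{as} is really a topology of the duality $\langle E,E'\rangle$, i.e. that $E'$ coincides with the space of $\tau$-continuous linear functionals, for only then does ``closed absolutely convex in $(E,\tau)$'' upgrade to ``$\sigma(E,E')$-closed absolutely convex'' and make the level-wise bipolar identity legitimate. A minor additional point is to check the boundary behaviour (e.g.\ at $\theta=1$, where $[\mu]_0=E$ and $E^\circ=\{0\}$) and to recall that the hypotheses implicitly exclude the degenerate case $\mu=0$, while $\mu^\circ\neq 0$ is automatic since $\mu^\circ(0)=1$.
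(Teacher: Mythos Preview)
Your proposal is correct and follows essentially the same route as the paper: write $\tau_f=w(\tau)$, use Proposition~\ref{1a} to see that each level set $[\mu]_\theta$ is closed and absolutely convex in $(E,\tau)$, apply the classical bipolar theorem to obtain $[\mu]_\theta^{\circ\circ}=[\mu]_\theta$, and then use the level identity $[\mu^\circ]_{1-\theta}=[\mu]_\theta^\circ$ to conclude. The paper carries out the last step by computing $\mu^{\circ\circ}(x)=\sup\{\theta:x\in[\mu^\circ]_{1-\theta}^\circ\}=\sup\{\theta:x\in[\mu]_\theta\}=\mu(x)$ directly, whereas you phrase it as equality of all level sets; this is only a cosmetic difference. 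Your explicit flagging of the compatibility of $\tau$ with the duality $\langle E,E'\rangle$ and the boundary case $\theta=1$ are points the paper passes over in silence, so your write-up is in fact more careful than the original.
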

\begin{proof}
Let $E'$ be the fuzzy dual of $E$. For $x\in E$, we have
$$\mu^{\circ\circ}(x)=\sup\{\theta:x\in[\mu^\circ]_{1-\theta}^\circ\}$$
Let $\tau_f=w(\tau)$. Since $\mu$ is fuzzy closed and absolutely convex, so by Proposition \ref{1a} $[\mu]_\theta $ is closed and absolutely convex with respect to $\tau$. Then, we have
$$[\mu^\circ]_{1-\theta}^\circ=([\mu]_\theta^\circ)^\circ=[\mu]_\theta^{\circ\circ}=[\mu]_\theta.$$
This shows that $$\mu^{\circ\circ}(x)=\sup\{\theta:x\in[\mu^\circ]_{1-\theta}^\circ\}=\sup\{\theta:x\in[\mu]_{\theta}^{\circ\circ}\}=\sup\{\theta:x\in[\mu]_{\theta}\}=\mu(x).$$
\end{proof}
\begin{corollary}\label{cor1}
Let $E$ be a vector space and $\mu\in I^E$. Then we have $\mu\leq\mu^{\circ\circ}$.
\end{corollary}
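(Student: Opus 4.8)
The plan is to reduce this fuzzy inequality to the classical one-line fact that every non-empty subset $A\subseteq E$ satisfies $A\subseteq A^{\circ\circ}$, using the level-set identity already established for fuzzy polars. So the argument will be entirely at the level of $\theta$-level sets.

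First, I would recall that the preceding Proposition gives $[\mu^{\circ}]_{\theta}=[\mu]_{1-\theta}^{\circ}$ for every $\theta\in(0,1]$. Since $(E',E)$ is again a dual pair, this identity applies to the fuzzy set $\mu^{\circ}$ on $E'$ as well; applying it, substituting $1-\theta$ for $\theta$, and then using it once more, I obtain
\[
[\mu^{\circ\circ}]_{\theta}=[\mu^{\circ}]_{1-\theta}^{\circ}=\big([\mu]_{1-(1-\theta)}^{\circ}\big)^{\circ}=\big([\mu]_{\theta}^{\circ}\big)^{\circ}=[\mu]_{\theta}^{\circ\circ}
\]
for every $\theta\in(0,1]$. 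The only care needed here is the bookkeeping of the $\theta\mapsto 1-\theta$ substitution when the identity is iterated.

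Next, I would invoke the elementary ``easy half'' of the bipolar theorem for ordinary sets: if $A\subseteq E$ is non-empty, then for $x\in A$ and any $x'\in A^{\circ}$ one has $|\langle x,x'\rangle|\le\sup_{y\in A}|\langle y,x'\rangle|\le 1$, so $x\in (A^{\circ})^{\circ}=A^{\circ\circ}$; hence $A\subseteq A^{\circ\circ}$. (The degenerate case $A=\emptyset$ is covered by $\emptyset^{\circ}=E'$ and, via axiom $(D)$, $(E')^{\circ}=\{0\}$, whence $\emptyset\subseteq\{0\}$; the case $\mu\equiv 0$ is trivial.) Combining this with the previous step yields $[\mu]_{\theta}\subseteq[\mu]_{\theta}^{\circ\circ}=[\mu^{\circ\circ}]_{\theta}$ for all $\theta\in(0,1]$.

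Finally, I would pass back from level sets to the pointwise inequality. Fix $x\in E$: if $\mu(x)=0$ there is nothing to prove, and if $\theta_{0}:=\mu(x)>0$ then $x\in[\mu]_{\theta_{0}}\subseteq[\mu^{\circ\circ}]_{\theta_{0}}$, so $\mu^{\circ\circ}(x)\ge\theta_{0}=\mu(x)$. Equivalently, one may unwind the definition of the bipolar directly: $\mu^{\circ\circ}(x)=\sup\{\theta\in(0,1]:x\in[\mu^{\circ}]_{1-\theta}^{\circ}\}=\sup\{\theta\in(0,1]:x\in[\mu]_{\theta}^{\circ\circ}\}\ge\sup\{\theta\in(0,1]:x\in[\mu]_{\theta}\}=\mu(x)$. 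I do not expect any genuine obstacle here; the proof is short, and the only delicate points are the $1-\theta$ substitution in the level-set computation and the empty-level-set / zero-function edge cases, all of which are handled by the standard conventions for polars.
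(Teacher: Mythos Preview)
Your proposal is correct and follows precisely the approach implicit in the paper. The paper gives no separate proof for this Corollary, but its placement immediately after Proposition~\ref{cm} makes clear that the intended argument is the one you wrote: use the level-set identity $[\mu^{\circ}]_{1-\theta}^{\circ}=[\mu]_{\theta}^{\circ\circ}$ (exactly as computed in the proof of Proposition~\ref{cm}), replace the classical bipolar \emph{equality} by the elementary inclusion $A\subseteq A^{\circ\circ}$, and read off $\mu(x)\le\mu^{\circ\circ}(x)$.
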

\begin{proposition}\label{p1}
Let $ (E, \tau_f) $ be a locally convex fuzzy topological vector space. If $ \mathcal{V} $ is neighborhood base for zero in $ E $ then $ E'_f=\bigcup_{\mu \in \nu}E_\mu, $ where $ E_\mu=\lbrace x \in E^* ;\:\: \mu^\circ(x)>0 \rbrace  $
\end{proposition}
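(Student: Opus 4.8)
The plan is to transfer the assertion to the linear topology $\tau$ underlying $\tau_f$ and then use the classical fact that a linear functional on a topological vector space is continuous precisely when it is bounded on a neighbourhood of zero. By Proposition~\ref{as} we may fix a linear topology $\tau$ on $E$ (locally convex, since by Proposition~\ref{1a}(c) the level sets of the absolutely convex members of a local base are absolutely convex) with $\tau_f=w(\tau)$. Moreover $E'_f$, the fuzzy dual, is exactly the topological dual $(E,\tau)'$: a linear map $x'\colon E\to\mathbb{K}$ is fuzzy continuous into $(\mathbb{K},w(\tau_{\mathbb{K}}))$, $\tau_{\mathbb{K}}$ the usual topology, iff $\nu\circ x'$ is lower semicontinuous on $(E,\tau)$ for every lower semicontinuous $\nu\colon\mathbb{K}\to I$; taking $\nu=\chi_{U}$ for open $U\subseteq\mathbb{K}$ (which lies in $w(\tau_{\mathbb{K}})$) forces $x'$ to be $\tau$-continuous, and the converse is immediate. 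Finally recall from the earlier proposition that $[\mu^\circ]_{\theta}=[\mu]_{1-\theta}^\circ$; hence $x'\in E_\mu$ iff $\mu^\circ(x')>0$ iff $\sup_{y\in[\mu]_{1-\theta}}|\langle y,x'\rangle|\le 1$ for some $\theta\in(0,1]$. So it remains to show $(E,\tau)'=\bigcup_{\mu\in\mathcal{V}}E_\mu$.

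The key step is the claim that $\mathcal{B}:=\{\,[\mu]_{\alpha}:\mu\in\mathcal{V},\ 0<\alpha<\mu(0)\,\}$ is a neighbourhood base of $0$ in $(E,\tau)$. For one direction, if $\mu\in\mathcal{V}$ and $0<\alpha<\mu(0)$ then $\mu$ is fuzzy open, so $\mu^{-1}(\alpha,1]$ is $\tau$-open by the construction of $\tau$ in Proposition~\ref{as}; it contains $0$ because $\mu(0)>\alpha$, and $\mu^{-1}(\alpha,1]\subseteq[\mu]_{\alpha}$, so $[\mu]_{\alpha}$ is a $\tau$-neighbourhood of $0$. For the other direction, a basic $\tau$-neighbourhood of $0$ is of the form $\bigcap_{i=1}^{n}\nu_i^{-1}(r_i,1]$ with $\nu_i\in\tau_f$ and $\nu_i(0)>r_i$; put $\nu=\bigwedge_{i=1}^{n}\nu_i\in\tau_f$ and $r=\max_i r_i<\nu(0)$, apply the local-base property of $\mathcal{V}$ with $\theta=r$ to obtain $\gamma\in\mathcal{V}$ with $\gamma\le\nu$ and $\gamma(0)>r$, and pick $\alpha\in(r,\gamma(0))$. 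Then every $y\in[\gamma]_{\alpha}$ satisfies $\nu_i(y)\ge\nu(y)\ge\gamma(y)\ge\alpha>r\ge r_i$, so $[\gamma]_{\alpha}\subseteq\bigcap_i\nu_i^{-1}(r_i,1]$, proving the claim.

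Both inclusions now follow quickly. If $x'\in E_\mu$ for some $\mu\in\mathcal{V}$, then $x'$ is bounded by $1$ on $[\mu]_{1-\theta}$ for some $\theta\in(0,1]$ with $1-\theta<\mu(0)$; since $[\mu]_{1-\theta}\in\mathcal{B}$ is a $\tau$-neighbourhood of $0$, the functional $x'$ is $\tau$-continuous, whence $\bigcup_{\mu\in\mathcal{V}}E_\mu\subseteq(E,\tau)'=E'_f$. Conversely, let $x'\in E'_f=(E,\tau)'$; then $U=\{y\in E:|\langle y,x'\rangle|<1\}$ is an open $\tau$-neighbourhood of $0$, so by the base property there are $\mu\in\mathcal{V}$ and $0<\alpha<\mu(0)$ with $[\mu]_{\alpha}\subseteq U$, hence $\sup_{y\in[\mu]_{\alpha}}|\langle y,x'\rangle|\le 1$, i.e.\ $x'\in[\mu]_{\alpha}^\circ=[\mu^\circ]_{1-\alpha}$, so $\mu^\circ(x')\ge 1-\alpha>0$ and $x'\in E_\mu$.

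I expect the main work to be the neighbourhood-base claim together with one point concealed in the first inclusion: one must guarantee that the level set $[\mu]_{1-\theta}$ witnessing $\mu^\circ(x')>0$ can be taken with $1-\theta<\mu(0)$, so that it is a genuine neighbourhood of $0$ rather than a possibly empty or degenerate set. This is automatic when the fuzzy neighbourhoods in $\mathcal{V}$ satisfy $\mu(0)=1$, since then every level $1-\theta$ with $\theta\in(0,1]$ lies strictly below $\mu(0)$; in the general case one has to check that the levels below $\mu(0)$ already detect $\mu^\circ(x')>0$ (equivalently, adopt the convention that the polar of the empty set is empty), and this bookkeeping is the only genuinely delicate part of the proof.
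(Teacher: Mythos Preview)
Your approach differs substantially from the paper's. The paper stays entirely inside the fuzzy framework: it invokes the characterization that a linear functional $x'$ is fuzzy continuous precisely when $x'(\mu)\le\chi_{[-1,1]}$ for some $\mu\in\mathcal{V}$, and then reads off from this inequality that $\mu(y)\ge 1-\theta$ forces $|\langle y,x'\rangle|\le 1$, so $x'\in[\mu]_{1-\theta}^\circ$ for every $\theta\in(0,1)$ and hence $\mu^\circ(x')>0$. You instead pass to the underlying linear topology $\tau$ via Proposition~\ref{as}, identify $E'_f$ with $(E,\tau)'$, and manufacture a classical neighbourhood base at $0$ from level sets of members of $\mathcal{V}$. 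Your route makes the link to the classical theory explicit and treats both inclusions symmetrically; the paper's argument is shorter but writes out only one direction.

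There is, however, a real slip in your neighbourhood-base claim. When you set $\nu=\bigwedge_{i=1}^{n}\nu_i$ and $r=\max_i r_i$, the asserted inequality $r<\nu(0)$ need not hold: from $r_i<\nu_i(0)$ for each $i$ one cannot deduce $\max_i r_i<\min_i\nu_i(0)$ (take $\nu_1(0)=0.3$, $r_1=0.2$, $\nu_2(0)=0.8$, $r_2=0.5$), so the local-base property of Definition~2.6 cannot be applied to $\nu$ with $\theta=r$. The repair is immediate: $U=\bigcap_i\nu_i^{-1}(r_i,1]$ is $\tau$-open and contains $0$, so $\chi_U\in w(\tau)=\tau_f$ with $\chi_U(0)=1$; apply the local-base property to $\chi_U$ with any $\theta\in(0,1)$ to obtain $\gamma\in\mathcal{V}$ with $\gamma\le\chi_U$ and $\gamma(0)>\theta$, whence $[\gamma]_\alpha\subseteq U$ for every $0<\alpha<\gamma(0)$. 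With this patch your argument goes through, and the delicate point you flag at the end about needing $1-\theta<\mu(0)$ is essentially the same issue in disguise and is handled by the same device.
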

\begin{proof}
Let $ x' \in E^\ast $. The linear functional $ x' $ is fuzzy continuous on $ E $ if there is $ \mu \in \mathcal{V} $ such that
$$ x'(\mu) \leq \chi_{_{H}},\:\: H=[-1, 1]. $$ Then $ x'(\mu)(z) \leq \chi_{_{H}}(z) $ for each $ z \in \mathbb{R}. $ This shows that
$$ \sup_{x'(y)=z}\mu(y) \leq \chi_{_{H}}(z). $$
Or equivalently, for each $ y \in E $ with $ x'(y)=z, $
\begin{align}
  \mu(y)&\leq \chi_{_{H}}(z)
  \nonumber\\ &=\chi_{_{H}}(x'(y))
  \nonumber\\ &=\chi_{_{H}}(\langle y, x^{\prime}\rangle).\nonumber
  \end{align}
 If $\mu(x)\geq 1-\theta$ for some $\theta\in(0,1]$, then $\chi_{_{H}}(\langle x, x^{\prime}\rangle)\geq 1-\theta$. Then $\chi_{_{H}}(\langle x, x^{\prime}\rangle)=1$ or $|\langle x, x^{\prime}\rangle|\leq 1$. This shows that $x'\in[\mu]_{1-\theta}^\circ $.
  Then $ \mu^{\circ}(x')\geq\theta>0 $ and $ x' \in E_\mu. $
\end{proof}
Let $ (E,\tau_f) $ be a fuzzy topological vector space. The fuzzy closure of the fuzzy set $ \mu $ is the smallest closed fuzzy set which contains $\mu$. Also, the convex envelope and absolutely convex envelope of $\mu$ are the smallest convex and absolutely convex fuzzy sets which contains $\mu$, respectively.

We denote the fuzzy closure, convex envelope and absolutely convex envelope of the fuzzy set $\mu$, by $\overline{\mu}^{f}$, $ co(\mu)$ and $aco(\mu)$, respectively.
\begin{lemma}
The fuzzy closure, convex envelope and absolutely convex envelope of a bounded fuzzy set is bounded.
\end{lemma}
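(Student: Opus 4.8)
The plan is to push the statement down to the ground topological vector space furnished by Proposition~\ref{as}. Write $\tau_f=w(\tau)$ and, for $\mu\in I^{E}$, set $\mathrm{supp}\,\mu=\mu^{-1}((0,1])=\bigcup_{0<\theta\le 1}[\mu]_{\theta}$. The crux I would establish first is the equivalence
$$\mu\ \text{is bounded in}\ (E,\tau_f)\quad\Longleftrightarrow\quad \mathrm{supp}\,\mu\ \text{is a bounded subset of}\ (E,\tau).$$
Granting it, the lemma follows quickly: for $\sigma\in\{\overline{\mu}^{f},\,co(\mu),\,aco(\mu)\}$ let $K$ be, respectively, the $\tau$-closure, the convex hull, or the absolutely convex hull of $\mathrm{supp}\,\mu$. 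Then $\chi_{_{K}}$ is a fuzzy closed (resp.\ fuzzy convex, resp.\ fuzzy absolutely convex) set: since $[\chi_{_{K}}]_{\theta}=K$ for every $\theta\in(0,1]$, the closed and absolutely convex cases reduce by Proposition~\ref{1a} to the corresponding property of $K$, while the convex case is the identity $t\chi_{_{K}}+(1-t)\chi_{_{K}}=\chi_{_{tK+(1-t)K}}\le\chi_{_{K}}$. From $\mathrm{supp}\,\mu\subseteq K$ we get $\mu\le\chi_{_{K}}$, and since $\sigma$ is the smallest fuzzy set of its kind lying above $\mu$, this forces $\sigma\le\chi_{_{K}}$, hence $\mathrm{supp}\,\sigma\subseteq K$. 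As $K$ is $\tau$-bounded — the closure of a bounded set is bounded in any topological vector space, and the convex and absolutely convex hulls of a bounded set are bounded once $\tau$ is locally convex — the equivalence yields that $\sigma$ is bounded.

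So the real work is the displayed equivalence. For $(\Rightarrow)$ I would test the boundedness of $\mu$ against the fuzzy sets $\chi_{_{U}}$, with $U$ ranging over a base of open $\tau$-neighbourhoods of $0$: each $\chi_{_{U}}$ is lower semicontinuous, hence a fuzzy open neighbourhood of the origin with $\chi_{_{U}}(0)=1$, so boundedness gives (with $\theta=1/2$, say) some $t>0$ with $(1/2)\wedge(t\mu)\le\chi_{_{U}}$; evaluating at points outside $U$ forces $\mu(x/t)=0$ whenever $x\notin U$, that is $t\cdot\mathrm{supp}\,\mu\subseteq U$, so $\mathrm{supp}\,\mu$ is bounded in $(E,\tau)$. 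For $(\Leftarrow)$, given a fuzzy neighbourhood $\eta$ of $0$ I would first replace it by a fuzzy open $\nu\le\eta$ with $\nu(0)=\eta(0)=:c>0$, so that it suffices to produce, for each $\theta\in(0,c)$, a $t>0$ with $\theta\wedge(t\mu)\le\nu$. Then $U_{\theta}:=\nu^{-1}((\theta,1])$ is an open $\tau$-neighbourhood of $0$, boundedness of $\mathrm{supp}\,\mu$ gives $\lambda>0$ with $\mathrm{supp}\,\mu\subseteq\lambda U_{\theta}$, and $t:=1/\lambda$ does the job: where $(t\mu)(x)>0$ we have $x/t\in\mathrm{supp}\,\mu\subseteq\lambda U_{\theta}$, hence $x\in t\lambda\,U_{\theta}=U_{\theta}$ and $\eta(x)\ge\nu(x)>\theta\ge(\theta\wedge(t\mu))(x)$, while the inequality is trivial where $(t\mu)(x)=0$.

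I expect the scaling step in $(\Leftarrow)$ to be the only delicate point: the absorbing factor must be pulled out of the sub-level set $\nu^{-1}((\theta,1])$ — which is precisely where lower semicontinuity of the members of $\tau_f$, i.e.\ Proposition~\ref{as}, enters — rather than by manipulating $\eta$ directly; a naive attempt to run $co(\cdot)$ or $aco(\cdot)$ through the definition of ``absorbs'' founders on the fact that $\theta\wedge co(\rho)$ need not lie below $co(\theta\wedge\rho)$. I would also note that only the closure part of the lemma holds without further hypotheses: the convex and absolutely convex cases genuinely use local convexity of $\tau$, which is available in the part of the paper where this lemma is applied.
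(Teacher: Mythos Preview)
Your argument is correct, but it takes a genuinely different route from the paper's. The paper works entirely inside the fuzzy topology: it fixes a base $\mathcal{V}$ of fuzzy closed absolutely convex neighbourhoods of zero (so local convexity is assumed implicitly, just as you make it explicit), and for $\varphi\in\mathcal{V}$ passes from $\theta\wedge(t\mu)\le\varphi$ directly to $\theta\wedge(t\overline{\mu}^{f})\le\overline{\varphi}^{f}=\varphi$, and similarly with $co(\cdot)$ and $aco(\cdot)$, using only that these hull operators are monotone and fix $\varphi$. Your approach instead drops to the underlying topological vector space via Proposition~\ref{as}: you isolate the equivalence ``$\mu$ bounded in $(E,\tau_f)$ $\Leftrightarrow$ $\mathrm{supp}\,\mu$ bounded in $(E,\tau)$'' and then invoke the classical facts that closures and (absolutely) convex hulls preserve boundedness. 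What your detour buys is a clean separation of the fuzzy and the classical content, plus an explicit and reusable characterisation of fuzzy boundedness; what the paper's three-line argument buys is that it never leaves the fuzzy category and avoids the support machinery altogether. Both proofs need local convexity at exactly the same place, and your remark that the closure case survives without it is a genuine refinement over what the paper states.
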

\begin{proof}
Let $ (E,\tau_f) $ be a FTVS. Let $ \mathcal{V} $ be a base of fuzzy closed absolutely convex neighborhoods of zero. If $ \mu \in I^E $ is bounded and $ \varphi \in \mathcal{V}, $ then for $ \theta \in (0, \varphi(0)) $ there is $ t>0 $ such that $ \theta \wedge (t\mu)\leq \varphi$. Since $\varphi $ is fuzzy closed, convex and absolutely convex, so we have
$$ \theta \wedge (t\overline{\mu}^{f})\leq \overline{\varphi}^{f}=\varphi, $$
$$ \theta \wedge (t(co(\mu))) \leq co(\varphi)=\varphi, $$
and
$$ \theta \wedge (t(aco(\mu)) \leq aco(\varphi)=\varphi.$$

\end{proof}
\begin{proposition}
The image of a bounded fuzzy set under a fuzzy continuous linear operator is bounded.
\end{proposition}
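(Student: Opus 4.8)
The plan is to reduce the statement to the definition of boundedness by pulling fuzzy neighborhoods of zero back through the operator, exactly as in the classical locally convex setting but with $\le$ between fuzzy sets replacing set inclusion. Let $T\colon E\to F$ be a fuzzy continuous linear operator between fuzzy topological vector spaces and let $\mu\in I^{E}$ be bounded. We must show that every fuzzy neighborhood $\omega$ of zero in $F$ absorbs $T(\mu)$; that is, $\omega(0)>0$ and for each $\theta<\omega(0)$ there is $s>0$ with $\theta\wedge\bigl(sT(\mu)\bigr)\le\omega$. The first requirement holds by the definition of a neighborhood, so the content is the second.

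First I would record a handful of elementary identities for the image operator $\eta\mapsto T(\eta)$, all immediate from the supremum formula defining the image of a fuzzy set. For a constant $\alpha\in I$ and $\eta\in I^{E}$ one has $T(\alpha\wedge\eta)=\alpha\wedge T(\eta)$, since the supremum distributes over the meet with a constant and both sides are $0$ on fibres that are empty. For a scalar $t\ne0$ one has $T(t\eta)=tT(\eta)$, via the substitution $x=tz$ inside the supremum together with $(t\eta)(x)=\eta(x/t)$ and the linearity of $T$. The map $\eta\mapsto T(\eta)$ is monotone, and $T\bigl(T^{-1}(\omega)\bigr)\le\omega$ for every $\omega\in I^{F}$. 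I would also check the standard fact that $T^{-1}(\omega)$ is a neighborhood of zero in $E$ whenever $\omega$ is one in $F$: choosing a fuzzy open $\eta\le\omega$ with $\eta(0)=\omega(0)>0$, fuzzy continuity makes $T^{-1}(\eta)$ fuzzy open, $T^{-1}(\eta)\le T^{-1}(\omega)$, and $T^{-1}(\eta)(0)=\eta(0)=\omega(0)=T^{-1}(\omega)(0)>0$.

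With these in hand the argument is short. Fix a fuzzy neighborhood $\omega$ of zero in $F$ and $\theta<\omega(0)$. Since $T^{-1}(\omega)(0)=\omega(T0)=\omega(0)$, the number $\theta$ lies below $T^{-1}(\omega)(0)$, and as $\mu$ is bounded it is absorbed by the neighborhood $T^{-1}(\omega)$ of zero in $E$; hence there is $t>0$ with $\theta\wedge(t\mu)\le T^{-1}(\omega)$. Applying $T$ and using the identities above,
$$\theta\wedge\bigl(tT(\mu)\bigr)=T\bigl(\theta\wedge(t\mu)\bigr)\le T\bigl(T^{-1}(\omega)\bigr)\le\omega,$$
so $s:=t$ works. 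As $\omega$ was an arbitrary neighborhood of zero in $F$, the fuzzy set $T(\mu)$ is bounded.

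I do not anticipate a genuine obstacle. The only point requiring a little care is the empty-fibre bookkeeping in the image identities when $T$ is not surjective, but every image fuzzy set takes the value $0$ outside the range of $T$, so all the displayed inequalities remain valid there. A reduction through $\theta$-level sets is also conceivable, but it is slightly more delicate, since images and $\theta$-level sets need not commute exactly, so I would present the direct fuzzy argument above.
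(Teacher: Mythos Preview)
Your proof is correct and follows essentially the same route as the paper: use continuity of $T$ to relate a given neighborhood of zero in $F$ to one in $E$, apply boundedness of $\mu$ there, then push forward via the identities $T(\theta\wedge t\mu)=\theta\wedge tT(\mu)$. The only cosmetic difference is that the paper invokes continuity in the form ``for each neighborhood $\varphi$ of zero in $F$ there is a neighborhood $\mu$ in $E$ with $T(\mu)\le\varphi$'' rather than working with $T^{-1}(\omega)$ directly; your version has the minor advantage that $T^{-1}(\omega)(0)=\omega(0)$ makes the range of admissible $\theta$ match on the nose.
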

\begin{proof}
Let $ (E, \tau_f) $ and $ (F,\tau'_f) $ be fuzzy topological vector spaces and $ T: E \rightarrow F $ be a fuzzy continuous linear operator. Then for each fuzzy neighborhood $ \varphi \in \tau'_f $ of zero, there is fuzzy neighborhood $ \mu \in \tau_f $ such that $ T(\mu) \leq \varphi. $ Now, let $ \eta $ be a fuzzy bounded set in $ E $. Then for each $ \theta \in (0, \mu(0)) $, there is $ t>0 $ such that $ \theta \wedge (t\eta) \leq \mu. $ This shows that
$$ \theta \wedge (tT(\eta)) \leq T(\mu) \leq \varphi. $$
Therefor $ T(\eta) $ is bounded in $ (F,\tau'_f). $
\end{proof}
\begin{theorem}
If a locally convex fuzzy topological vector space has a bounded zero neighborhood, then it is Katsaras seminormable.
\end{theorem}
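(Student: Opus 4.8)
The plan is to reduce the statement to the classical Kolmogorov seminormability criterion and then fuzzify the resulting seminorm. Write $\tau_f=w(\tau)$ as in Proposition \ref{as}. The linear topology $\tau$ is locally convex: an absolutely convex base of $\tau_f$ at the origin yields, through the superlevel sets $\varphi^{-1}(r,1]=\bigcup_{\theta>r}[\varphi]_{\theta}$ and Proposition \ref{1a}(c), an absolutely convex base of $\tau$ at the origin. Let $\mu$ be the given bounded fuzzy neighborhood of zero; shrinking it inside an absolutely convex fuzzy neighborhood (a subset of a bounded fuzzy set is again bounded, directly from the definition of absorption) we may assume that $\mu$ is absolutely convex.

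First I would extract a genuine, classically bounded neighborhood. Set $S=\{x\in E:\mu(x)>0\}=\bigcup_{r>0}\mu^{-1}(r,1]$. Since $\mu$ is lower semicontinuous on $(E,\tau)$ and $\mu(0)>0$, the set $S$ is a $\tau$-open, absolutely convex neighborhood of $0$. To see that $S$ is $\tau$-bounded, fix any $\tau$-open set $W\ni 0$; then $\chi_{_{W}}\in w(\tau)=\tau_f$ is a fuzzy neighborhood of $0$ with $\chi_{_{W}}(0)=1$, so boundedness of $\mu$ gives, for each $\theta\in(0,1)$, some $t>0$ with $\theta\wedge(t\mu)\le\chi_{_{W}}$. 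Evaluating this inequality at points outside $W$ forces $\mu(x/t)=0$ whenever $x\notin W$, i.e. $tS\subseteq W$; hence every $\tau$-neighborhood of $0$ absorbs $S$.

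Thus $(E,\tau)$ is a locally convex space with a bounded neighborhood of zero, so by the classical theorem it is seminormable: the Minkowski functional $p$ of $S$ is a seminorm with $\tau=\tau_p$. Now put $\rho:=(1-p)\vee 0\in I^{E}$. Its level sets $[\rho]_{\theta}=\{x:p(x)\le 1-\theta\}$, $\theta\in(0,1]$, are absolutely convex, so $\rho$ is fuzzy absolutely convex by Proposition \ref{1a}(c); and $\rho(0)=1$, so $\rho$ is absorbing. Hence $\rho$ is a fuzzy seminorm in the sense of Katsaras, and it remains to identify the fuzzy linear topology it induces---the one admitting $\{\theta\wedge(t\rho):\theta\in(0,1],\ t>0\}$ as a base of neighborhoods of the origin---with $\tau_f$. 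On one hand, every $\theta\wedge(t\rho)$ is lower semicontinuous on $(E,\tau_p)=(E,\tau)$, since its strict superlevel sets are $p$-balls, so it lies in $w(\tau)=\tau_f$. On the other hand, given a fuzzy neighborhood $\nu$ of $0$ in $\tau_f$ and $\theta^{*}<\nu(0)$, lower semicontinuity of $\nu$ at $0$ provides a $p$-ball on which $\nu>\theta'$ for some $\theta'\in(\theta^{*},\nu(0))$; taking $t$ equal to the radius of that ball gives $\theta'\wedge(t\rho)\le\nu$ with $(\theta'\wedge(t\rho))(0)=\theta'>\theta^{*}$. Since both topologies are fuzzy linear, coincidence of their neighborhood bases at $0$ forces them to coincide, and $(E,\tau_f)$ is Katsaras seminormable.

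I expect the crux to be the second paragraph: one must see precisely how the truncation by a constant in the fuzzy notion of boundedness of $\mu$, when tested against the crisp neighborhoods $\chi_{_{W}}$, collapses to ordinary absorption of the support $S$ inside $(E,\tau)$. Once a classically bounded neighborhood is in hand, the rest is the classical seminormability theorem (which we may quote) together with the routine dictionary between the seminorm $p$ and its fuzzification $\rho$.
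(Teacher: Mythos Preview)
Your argument is correct, but it is a substantial detour compared with what the paper does. The paper never leaves the fuzzy setting: it simply replaces the given bounded neighborhood $\vartheta$ by its absolutely convex envelope $\mu=\operatorname{aco}(\vartheta)$ (bounded by the preceding lemma), observes that $\mu$ is then a Katsaras fuzzy seminorm, and reads off directly from the definition of fuzzy boundedness that $\{\theta\wedge t\mu:t>0,\ \theta\in[0,1)\}$ is a neighborhood base at $0$. In other words, the bounded absolutely convex neighborhood \emph{is} the fuzzy seminorm, and the single absorption inequality $\theta\wedge t\mu\le S$ simultaneously witnesses boundedness and the local-base property.

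Your route instead passes through Proposition~\ref{as} to de-fuzzify to $(E,\tau)$, extracts from the support $S=\{\mu>0\}$ a classically bounded $\tau$-neighborhood, invokes the ordinary Kolmogorov criterion to get a seminorm $p$, and then re-fuzzifies via $\rho=(1-p)\vee 0$. This is longer and requires the auxiliary verifications that $\tau$ is locally convex, that fuzzy boundedness of $\mu$ against the crisp $\chi_{_{W}}$ collapses to classical absorption of $S$, and that the Katsaras topology of $\rho$ coincides with $w(\tau_p)$. All of those steps are fine as you wrote them. The payoff of your approach is that it makes the link with the classical Kolmogorov theorem completely explicit and produces a fuzzy seminorm with $\rho(0)=1$; the payoff of the paper's approach is a two-line proof that avoids the dictionary $\tau_f=w(\tau)$ and the construction of $\rho$ altogether. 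One small phrasing point: when you ``shrink $\mu$ inside an absolutely convex fuzzy neighborhood'' you are using local convexity of $\tau_f$ to find such a $\gamma\le\mu$; the paper instead enlarges $\vartheta$ to $\operatorname{aco}(\vartheta)$ and cites the previous lemma to retain boundedness---either manoeuvre works.
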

\begin{proof}
Let $(E,\tau_f)$ be locally convex fuzzy topological vector space and $ \vartheta $ be a fuzzy bounded neighborhood of zero. We set $ \mu=aco(\vartheta)$. Then $\mu$ is absolutely convex and bounded. Since $ \mu $ is fuzzy bounded, for each neighborhood $S$ of zero and each $ \theta \in (0,\mu(0)) $ there is $t>0$ such that
$$\theta\wedge t\mu \leq S.$$
This shows that $ \mathcal{F}=\lbrace \theta\wedge t\mu:t>0, 0\leq \theta<1 \rbrace $ is a neighborhood base at zero. Then $(E,\tau_f)$ is a Katsaras seminormable.
\end{proof}


\section{Polar fuzzy topologies}
Let $ (E, E') $ be a dual pair and $ \mathcal{B} $ be a collection of weakly fuzzy bounded subsets of $ E $ satisfying the following conditions:
\begin{enumerate}
\item[$(c_1)$] If $ \mu \in \mathcal{B} $ and $ \varphi \in \mathcal{B} $, then there is $ S \in \mathcal{B} $ such that $ \mu \vee \varphi < S $.
\item[$(c_2)$] If $ \mu \in \mathcal{B}$, then $ t\mu \in \mathcal{B} $ for each $ t \in \mathbb{R} $.
\item[$(c_3)$] For each $x \in E$, there is $ \mu \in \mathcal{B} $ such that $ \mu(x)>0 $.
\end{enumerate}
  Then the polars of the elements of $ \mathcal{B} $ i.e. $ \mathcal{B^{\circ}}= \lbrace \mu^{\circ}:  \mu \in \mathcal{B} \rbrace $ form a neighborhood base at zero for a linear fuzzy topology on $ E' $, namely the polar fuzzy topology.
  Let $ (E,E') $ be a dual pair. If $\mathcal{B}  $ is the collection of all weakly bounded fuzzy subsets of $ E $, then the corresponding polar fuzzy topology is the finest polar fuzzy topology on $ E' $ which is denoted by $ \beta_f(E,E'). $
  \begin{example}
  Let $ (E,E') $ be a dual pair. Also, suppose $ \mathcal{B} $ is the collection of all the fuzzy sets $ A_{\lambda} $ defined as follows: for the finite subset $ A\subset E, $ and $ 0<\lambda \leq 1 $ we set
  $$ A_{\lambda}:E\rightarrow I $$
  $$A_{\lambda}(x)=\begin{cases}\lambda &x \in A,\\0 &else.\end{cases}$$
  Then the collection $ \mathcal{B} $ satisfies the conditions $ (c_1),(c_2) $ and $ (c_3) $. For $ (c_1), $ let $ A_{\lambda},B_{\gamma} \in \mathcal{B}, $ we set $ C=A\cup B $ and $ \alpha= \max \lbrace \lambda,\gamma \rbrace. $ Then we have $ A_{\lambda}\vee B_{\gamma} \leq C_{\alpha}. $ For $ (c_2) $, let $ A_{\lambda} \in \mathcal{B} $ and $ t \in \mathbb{R}. $ We set $ B=tA. $ Now, we have
   \begin{align}
  (tA_{\lambda})(x)&= A_{\lambda}(\frac{x}{t})
  \nonumber\\ &=\begin{cases}\lambda &\frac{x}{t} \in A \\0 &else \end{cases}\nonumber \\&= \begin{cases}\lambda &x \in tA \\0 &else \end{cases}\nonumber \\&=\begin{cases}\lambda &x \in B \\0 &else \end{cases}\nonumber\\&=
  B_{\lambda}(x)\nonumber.
  \end{align}
  For $ (c_3) $, let $ x \in E $. Then $ \chi_{_{A}} \in B $ where $ A=\lbrace x \rbrace $ and we have $ \chi_{_{A}}(x)=1. $ Hence the collection $ \mathcal{B^{\circ}}= \lbrace \mu^{\circ}:  \mu \in \mathcal{B} \rbrace $ is a base at zero for a linear fuzzy topology. We have
 \begin{align}
  (A_{\lambda})^{\circ}(x')&=\sup A_{A_\lambda,x'}=\lambda\wedge \chi_{_{A^\circ}}(x').
  \nonumber
  \end{align}
  So the polar fuzzy topology is generated by fuzzy seminorms $ \chi_{_{A^{\circ}}} $ in this case.
  \end{example}
  \begin{theorem}
  Let $ (E,E') $ be dual pair. Then the following statements hold:
  \begin{enumerate}
\item[$(a)$] any polar fuzzy topology is finer than the weak fuzzy topology $ \sigma_f(E',E) $,
\item[$(b)$] any polar fuzzy topology is fuzzy Hausdorff.
\end{enumerate}
  \end{theorem}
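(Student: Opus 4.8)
\noindent The plan is to establish $(a)$ directly and then read off $(b)$ from it. Throughout, write $\tau_{\mathcal B}$ for the polar fuzzy topology determined by $\mathcal B$, so that $\mathcal B^{\circ}=\{\mu^{\circ}:\mu\in\mathcal B\}$ is a neighbourhood base at $0$ (and, as usual, each cut $\theta'\wedge\mu^{\circ}$ with $\theta'\in(0,1]$ is again a $\tau_{\mathcal B}$-neighbourhood of $0$). Since $\sigma_f(E',E)$ is by definition the weakest linear fuzzy topology on $E'$ for which every $x\in E$ is fuzzy continuous as a linear functional into $(\mathbb K,\omega(\tau))$, to prove $(a)$ it is enough to check that each $x\in E$ is already fuzzy continuous on $(E',\tau_{\mathcal B})$; minimality of $\sigma_f(E',E)$ then gives $\sigma_f(E',E)\subseteq\tau_{\mathcal B}$, which is exactly the claim that every polar fuzzy topology is finer than the weak fuzzy topology. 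This is the $E'$-side mirror of Proposition \ref{p1}, and I would run essentially the same computation there.

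So fix $x\in E$. As $x$ is linear and both topologies are linear fuzzy topologies, it suffices to verify fuzzy continuity at $0$, i.e. that $x^{-1}(W)$ is a $\tau_{\mathcal B}$-neighbourhood of $0$ for a basic neighbourhood $W=\theta\wedge\chi_{\{z:\,|z|<\rho\}}$ of $0$ in $(\mathbb K,\omega(\tau))$; here $x^{-1}(W)=\theta\wedge\chi_{\{x':\,|\langle x,x'\rangle|<\rho\}}$. Using $(c_3)$ choose $\mu\in\mathcal B$ with $\mu(x)>0$, and use $(c_2)$ to pass to the scalar multiple $\nu=t\mu\in\mathcal B$ with $t=2/\rho$, for which $\nu(2x/\rho)=\mu(x)>0$. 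Then for every $s$ with $1-s\le\mu(x)$ the point $2x/\rho$ lies in $[\nu]_{1-s}$, hence $[\nu]_{1-s}^{\circ}\subseteq\{2x/\rho\}^{\circ}=\{x':\,|\langle x,x'\rangle|\le\rho/2\}$; by the identity $[\nu^{\circ}]_{s}=[\nu]_{1-s}^{\circ}$ proved above this forces $\nu^{\circ}$ to be concentrated, above a fixed level, inside $\{x':\,|\langle x,x'\rangle|<\rho\}$, so that an appropriate cut $\theta'\wedge\nu^{\circ}$ sits below $x^{-1}(W)$ and witnesses that $x^{-1}(W)$ is a $\tau_{\mathcal B}$-neighbourhood of $0$. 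If one prefers to argue with finite sets rather than a single functional — say, to match a basic $\sigma_f(E',E)$-neighbourhood coming through Lemma \ref{lem12} — then $(c_1)$ is used first to absorb the finitely many functionals involved into one member of $\mathcal B$, and the same estimate applies.

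For $(b)$ I would argue as follows. By part $(a)$, $\tau_{\mathcal B}$ is finer than $\sigma_f(E',E)$; and any fuzzy topology finer than a fuzzy Hausdorff one is again fuzzy Hausdorff, since the separating pair $\eta,\beta$ produced for two given points in the coarser topology still belongs to the finer one. Hence it remains only to see that $\sigma_f(E',E)$ is fuzzy Hausdorff. By Lemma \ref{lem12}, $\sigma_f(E',E)=\omega(\sigma(E',E))$, and $\sigma(E',E)$ is a Hausdorff topology: by axiom $(D')$ of the dual pair, for each $x'\neq 0$ in $E'$ there is $x\in E$ with $\langle x,x'\rangle\neq 0$, so the functionals $x'\mapsto\langle x,x'\rangle$, $x\in E$, separate the points of $E'$. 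Given $x'\neq y'$, pick $x_{0}\in E$ with $\langle x_{0},x'\rangle\neq\langle x_{0},y'\rangle$ and disjoint open sets $P\ni\langle x_{0},x'\rangle$, $Q\ni\langle x_{0},y'\rangle$ in $\mathbb K$; then $\eta:=x_{0}^{-1}(\chi_{P})$ and $\beta:=x_{0}^{-1}(\chi_{Q})$ are $\omega(\sigma(E',E))$-open (the $\chi_{P},\chi_{Q}$ are lower semicontinuous and $x_{0}$ is $\sigma(E',E)$-continuous), they satisfy $\eta(x')=\beta(y')=1$, and $\eta\wedge\beta=x_{0}^{-1}(\chi_{P\cap Q})=0$. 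Thus $\sigma_f(E',E)$ is fuzzy Hausdorff, and therefore so is every polar fuzzy topology $\tau_{\mathcal B}$.

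The step I expect to be the real obstacle is the estimate inside $(a)$: turning the purely pointwise information $\mu(x)>0$ supplied by $(c_3)$, together with the rescaling allowed by $(c_2)$, into honest control of the values of the fuzzy polar $\mu^{\circ}$ on the complement $\{x':\,|\langle x,x'\rangle|\ge\rho\}$ — that is, the bookkeeping with the $\theta$-level sets and the identity $[\mu^{\circ}]_{s}=[\mu]_{1-s}^{\circ}$, and in particular checking that the chosen cut of $\mu^{\circ}$ actually drops to $0$ outside the target set rather than merely below some small level. In the concrete instances (cf.\ the preceding example) this is clean because $\mathcal B$ then contains the characteristic functions $\chi_{F}$ of finite sets, for which $(\chi_{F})^{\circ}=\chi_{F^{\circ}}$ and no such floor appears; I expect the write-up to exploit this via $(c_1)$, $(c_2)$, $(c_3)$ to reduce to characteristic functions. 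The reduction to continuity at the origin, the role of $(c_1)$ in handling finitely many functionals at once, and the "finer than fuzzy Hausdorff is fuzzy Hausdorff" passage in $(b)$ are all routine.
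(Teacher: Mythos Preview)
Your overall plan coincides with the paper's: show that every $x\in E$ is fuzzy continuous on $(E',\tau_{\mathcal B})$, invoke the minimality of $\sigma_f(E',E)$ to get $(a)$, and deduce $(b)$ from ``finer than fuzzy Hausdorff is fuzzy Hausdorff'' (the paper simply asserts $\sigma_f(E',E)$ is fuzzy Hausdorff; your explicit verification via Lemma~\ref{lem12} and axiom $(D')$ is a welcome addition). Where you diverge is in the \emph{mechanism} for $(a)$. You attempt a direct level-set estimate and correctly flag the obstacle: from $\mu(x)>0$ one only gets $\nu^{\circ}(x')<1-\mu(x)$ on $\{|\langle x,x'\rangle|\ge\rho\}$, not $\nu^{\circ}(x')=0$, so a cut $\theta'\wedge\nu^{\circ}$ need not lie below $\theta\wedge\chi_{\{|\langle x,\cdot\rangle|<\rho\}}$. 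Your proposed escape --- reducing via $(c_1),(c_2),(c_3)$ to characteristic functions of finite sets --- is not available in general, since the axioms do not force $\mathcal B$ to contain any $\chi_F$.

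The paper sidesteps exactly this difficulty with a different key lemma. Rather than estimating $\mu^{\circ}$ directly, it applies Proposition~\ref{p1} to $(E',\tau_{\mathcal B})$ with neighbourhood base $\mathcal B^{\circ}=\{\mu^{\circ}:\mu\in\mathcal B\}$: by that proposition, $x\in(E')'$ as soon as $(\mu^{\circ})^{\circ}(x)>0$ for some $\mu\in\mathcal B$. The bipolar inequality of Corollary~\ref{cor1} gives $\mu^{\circ\circ}(x)\ge\mu(x)>0$ immediately from $(c_3)$, and the argument is finished in one line. So the ingredient you were missing is not a sharper level-set computation but the observation that the continuity of $x$ is equivalent (via Proposition~\ref{p1}) to the positivity of the \emph{bipolar} at $x$, which comes for free from $\mu\le\mu^{\circ\circ}$. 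This is what you gestured at with ``the $E'$-side mirror of Proposition~\ref{p1}'', but the point is that one should invoke that proposition together with Corollary~\ref{cor1} rather than rerun its proof.
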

  \begin{proof}
  Let $\mathcal{B}$ be a collection of fuzzy sets of $E$ which have the properties $(a)$, $(b)$ and $(c)$ and $\tau_p$ be the polar fuzzy topology on $E'$. Firstly, we prove that each $x\in E $ is continuous on $E' $. The condition $ (c_3) $ shows that there is $ \mu\in \mathcal{B} $ such that $\mu(x)>0$. Therefore $ \mu^{\circ\circ}(x)\geq \mu(x)> 0$ by Corollary \ref{cor1}. Now, Proposition \ref{p1} shows that $ x \in (E')'$. Then each $x\in E $ is fuzzy continuous on $ E'$. Now, since $ \sigma_f(E',E) $ is the weakest fuzzy topology on $ E' $ which make each $ x \in E $ continuous, we have
  $$ \sigma_f(E',E) \subset \tau_p. $$ Also, since $ \sigma_f(E',E) $ is fuzzy Hausdorff, then $ \tau_p $ is fuzzy Hausdorff.
\end{proof}
\begin{theorem}[Mackey-Arens type Theorem]\label{Ma}
  Let $ (E,E') $ be dual pair and under the linear fuzzy topology $\tau_f$ E be a fuzzy Hausdorff locally convex space. Then $E$ has dual $E'$ if and only if $\tau_f$ is the polar fuzzy topology on a collection of $\sigma_f(E',E)$-compact  fuzzy subsets of $E'$.
\end{theorem}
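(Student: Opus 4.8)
The plan is to transcribe the classical Mackey--Arens proof into the fuzzy setting, with Proposition~\ref{cm} replacing the bipolar theorem, Theorem~\ref{145} replacing the Alaoglu--Bourbaki theorem, Proposition~\ref{p1} supplying the description of the fuzzy dual, Proposition~\ref{1a} allowing passage between fuzzy sets and their level sets, and the preceding theorem (with $E$ and $E'$ exchanged) providing that every polar fuzzy topology refines the weak one. As in the classical statement, ``$\sigma_f(E',E)$-compact fuzzy subsets of $E'$'' should be read as \emph{absolutely convex} $\sigma_f(E',E)$-compact fuzzy subsets --- this is automatic for the family produced in one direction and is what legitimizes the bipolar step in the other; and ``$E$ has dual $E'$'' means $E'_f=E'$, where $E'_f$ is the fuzzy dual of $(E,\tau_f)$.

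\emph{Necessity ($\Rightarrow$).} Suppose $E'_f=E'$. By local convexity fix a base $\mathcal V$ of fuzzy closed absolutely convex neighbourhoods of zero and put $\mathcal B=\{\,t\mu^{\circ}:\mu\in\mathcal V,\ t\in\mathbb R\,\}\subseteq I^{E'}$. By Theorem~\ref{145} each $\mu^{\circ}$ is $\sigma_f(E',E)$-compact, hence so is every $t\mu^{\circ}$ by Proposition~\ref{1a}(a), and each member of $\mathcal B$ is absolutely convex by part~(e) of the polar-properties proposition; in particular $\mathcal B$ consists of weakly fuzzy bounded sets. Then one checks $(c_1)$--$(c_3)$ with $E$ and $E'$ swapped: $(c_2)$ holds by construction; $(c_1)$ because $\mu\wedge\varphi$ contains some $\rho\in\mathcal V$, whence $\rho^{\circ}\ge\mu^{\circ}\vee\varphi^{\circ}$ by monotonicity of the polar (part~(a)), a scalar multiple of $\rho^{\circ}$ yielding the strict inequality; and $(c_3)$, i.e.\ that for each $x'\in E'$ some $\mu\in\mathcal V$ has $\mu^{\circ}(x')>0$, is precisely $E'=E'_f=\bigcup_{\mu\in\mathcal V}E_{\mu}$ of Proposition~\ref{p1} --- the one place the hypothesis is used. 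So $\mathcal B$ determines a polar fuzzy topology $\tau_p$ on $E$ with neighbourhood base $\mathcal B^{\circ}$. For $t\ne0$, part~(b) of the polar-properties proposition and Proposition~\ref{cm} give $(t\mu^{\circ})^{\circ}=\frac1{|t|}\mu^{\circ\circ}=\frac1{|t|}\mu$, and the $t=0$ member of $\mathcal B^{\circ}$ is essentially the whole space; hence $\mathcal B^{\circ}$ is, up to that inessential set, the collection of all positive scalar multiples of members of $\mathcal V$, which is again a neighbourhood base of $\tau_f$. Therefore $\tau_p=\tau_f$.

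\emph{Sufficiency ($\Leftarrow$).} Let $\tau_f$ be the polar fuzzy topology of a collection $\mathcal B$ of absolutely convex $\sigma_f(E',E)$-compact fuzzy subsets of $E'$, so $\mathcal B^{\circ}=\{\mu^{\circ}:\mu\in\mathcal B\}$ is a neighbourhood base of zero in $(E,\tau_f)$. First, $E'\subseteq E'_f$: by the preceding theorem applied with $E$ and $E'$ interchanged, $\tau_f$ is finer than $\sigma_f(E,E')$, and each $x'\in E'$ is $\sigma_f(E,E')$-fuzzy continuous, hence $\tau_f$-fuzzy continuous. Conversely, let $x'\in E'_f$. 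By Proposition~\ref{p1} there are $\mu\in\mathcal B$ and $\theta\in(0,1]$ with $x'\in\big([\mu^{\circ}]_{1-\theta}\big)^{\circ}=\big([\mu]_{\theta}^{\circ}\big)^{\circ}$, the polars taken against the full algebraic dual $E^{*}$. By Proposition~\ref{1a}, $[\mu]_{\theta}\subseteq E'$ is $\sigma(E',E)$-compact and absolutely convex; since the topology induced on $E'$ by $\sigma(E^{*},E)$ is $\sigma(E',E)$, it is $\sigma(E^{*},E)$-compact, hence $\sigma(E^{*},E)$-closed, and being absolutely convex it equals its own bipolar in the pair $(E,E^{*})$. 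Thus $\big([\mu]_{\theta}^{\circ}\big)^{\circ}=[\mu]_{\theta}\subseteq E'$, so $x'\in E'$. Hence $E'_f=E'$.

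\emph{Main obstacle.} Everything turns on that final step of sufficiency: one must ensure a functional that is merely fuzzy continuous for a polar fuzzy topology lands back in $E'$, not in some larger dual such as the $\beta_f(E,E')$-dual. As classically, this is forced by \emph{compactness} (not mere boundedness) of the polar sets --- compactness of $[\mu]_{\theta}$ in $\sigma(E',E)$ transfers to $\sigma(E^{*},E)$, so $[\mu]_{\theta}$ stays closed in the large duality and its $(E,E^{*})$-bipolar does not grow --- and it is also why absolute convexity must be part of the hypothesis (hence why the necessity part starts from a base of \emph{closed absolutely convex} neighbourhoods). The rest --- verifying $(c_1)$ and $(c_3)$ and computing $\mathcal B^{\circ}$ --- is routine bookkeeping with the polar identities (a)--(e) of the previous section.
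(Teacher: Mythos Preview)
Your proof is correct and follows the same overall architecture as the paper's --- Proposition~\ref{cm} (bipolar) and Theorem~\ref{145} (Alaoglu) for necessity, Proposition~\ref{p1} for sufficiency --- but you are considerably more careful at several points the paper glosses over. In the forward direction the paper simply takes the family $\{\nu^{\circ}:\nu\in\mathcal V\}$ and asserts $\nu^{\circ\circ}=\nu$, without verifying $(c_1)$--$(c_3)$; you close in the scalar multiples and check these axioms, and you make explicit that $(c_3)$ is exactly where the hypothesis $E'_f=E'$ enters. In the converse direction the paper writes the single line ``$\kappa^{\circ\circ}=\kappa$, hence $\kappa(x')>0$'', implicitly invoking Proposition~\ref{cm} on $E'$; you instead descend to level sets and appeal to the classical bipolar theorem in the pair $(E,E^{*})$, using that $\sigma(E',E)$-compactness of $[\mu]_\theta$ forces $\sigma(E^{*},E)$-closedness so the bipolar does not leave $E'$. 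Your route makes transparent why absolute convexity of the members of $\mathcal B$ is needed (the paper's statement omits it but its proof tacitly uses it), and it also supplies the inclusion $E'\subseteq E'_f$, which the paper does not address. The gain is rigor; the cost is length --- the paper's four-line argument and yours prove the same thing by the same mechanism.
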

\begin{proof}
Let $E$ has dual $E'$ under the linear fuzzy topology $\tau_f$ and $\mathcal{V}$ be a base of fuzzy closed neighborhoods of zero. For each $\nu\in\mathcal{V}$, we have $\nu^{\circ\circ}=\nu$ by Proposition \ref{cm}. This shows that $\tau_f$ is the polar fuzzy topology on the collection $\{\nu^\circ:\nu\in\mathcal{V}\}$. Now by Theorem \ref{145}, each $\nu^\circ$ is $\sigma_f(E',E)$-compact. Then $\tau_f$ is the polar fuzzy topology on a collection of fuzzy $\sigma_f(E',E)$-compact subsets of $E'$.

Now, let $\tau_f$ be the polar topology on a collection $\mathcal{K}$ of $\sigma_f(E',E)$-compact fuzzy sets in $E'$,  and $x'$ be a fuzzy continuous linear operator on $E$. By Proposition \ref{p1}, we have $\kappa^{\circ\circ}(x')>0$ for some $\kappa\in\mathcal{K}$.
But we have $\kappa^{\circ\circ}=\kappa$. Then $\kappa(x')>0$. This shows that $x'\in E'$
\end{proof}
\begin{corollary}
Theorem \ref{Ma} shows that there is the finest dual pair linear fuzzy topology on $ E. $ This fuzzy topology is the polar fuzzy topology on the collection of all $ \sigma_f(E,E')$- compact subsets of $ E $. We denote this fuzzy topology by $ \tau_f(E,E'). $
\end{corollary}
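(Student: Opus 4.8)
The plan is to unwind the statement into three assertions about the collection $\mathcal{K}_0$ of \emph{all absolutely convex $\sigma_f(E',E)$-compact fuzzy subsets of $E'$}: that $\mathcal{K}_0$ generates a polar fuzzy topology $\tau_f(E,E')$ on $E$, that $(E,\tau_f(E,E'))$ has fuzzy dual $E'$, and that every linear fuzzy topology on $E$ with dual $E'$ is coarser than $\tau_f(E,E')$. Here I read the phrase ``$\sigma_f(E,E')$-compact subsets of $E$'' in the statement as $\sigma_f(E',E)$-compact fuzzy subsets of $E'$, since a polar fuzzy topology built from fuzzy sets of $E$ lives on $E'$, and the symmetric construction producing a topology on $E$ must start from fuzzy sets of $E'$. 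The whole argument is a packaging of Theorem \ref{Ma} together with the monotonicity of the polar construction in its generating collection.

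First I would check that $\mathcal{K}_0$ satisfies the admissibility conditions $(c_1)$, $(c_2)$, $(c_3)$ (with the roles of $E$ and $E'$ interchanged), so that $\mathcal{K}_0^{\circ}=\{\kappa^{\circ}:\kappa\in\mathcal{K}_0\}$ is a neighborhood base at zero for a linear fuzzy topology; note first that every $\sigma_f(E',E)$-compact fuzzy set is $\sigma_f(E',E)$-bounded, hence weakly fuzzy bounded. For $(c_3)$, given $x'\in E'$ the characteristic function of the segment $\{\lambda x':|\lambda|\le 1\}$ is absolutely convex and $\sigma_f(E',E)$-compact (its level sets are that segment or $\emptyset$, and Proposition \ref{1a}(a),(c) applies), and it takes value $1$ at $x'$. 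For $(c_2)$, $t\mu$ is again absolutely convex $\sigma_f(E',E)$-compact, because scalar multiplication is a $\sigma_f(E',E)$-homeomorphism for $t\neq 0$, while for $t=0$ the level sets of $t\mu$ are $\{0\}$ or $\emptyset$. For $(c_1)$, given $\mu,\varphi\in\mathcal{K}_0$ I would take $S$ to be (a harmless dilation of) the closed absolutely convex hull of $\mu\vee\varphi$: since $[\mu\vee\varphi]_{\theta}=[\mu]_{\theta}\cup[\varphi]_{\theta}$, its closed absolutely convex hull is a continuous image of $[\mu]_{\theta}\times[\varphi]_{\theta}\times\{(s,t):|s|+|t|\le 1\}$, hence $\sigma(E',E)$-compact, and it is closed and absolutely convex, so Proposition \ref{1a} yields $S\in\mathcal{K}_0$ with $\mu\vee\varphi<S$.

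With $\mathcal{K}_0$ admissible, $\tau_f(E,E')$ is a linear fuzzy topology on $E$, and since it is by construction the polar fuzzy topology on a collection of (absolutely convex) $\sigma_f(E',E)$-compact fuzzy subsets of $E'$, the ``if'' part of Theorem \ref{Ma} gives that the fuzzy dual of $(E,\tau_f(E,E'))$ is $E'$; thus $\tau_f(E,E')$ is a dual pair linear fuzzy topology on $E$. For finest, let $\tau_f$ be any linear fuzzy topology on $E$ with dual $E'$. By (the proof of) Theorem \ref{Ma}, $\tau_f$ is the polar fuzzy topology on a collection $\mathcal{K}$ of closed absolutely convex $\sigma_f(E',E)$-compact fuzzy subsets of $E'$ — these arise as the fuzzy polars $\nu^{\circ}$ of a base of fuzzy closed neighborhoods of zero, and fuzzy polars are absolutely convex. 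Hence $\mathcal{K}\subseteq\mathcal{K}_0$, so the polar neighborhoods $\mathcal{K}^{\circ}$ of $\tau_f$ are in particular polar neighborhoods of $\tau_f(E,E')$; by monotonicity of the polar construction this forces $\tau_f\subseteq\tau_f(E,E')$. Therefore $\tau_f(E,E')$ is the finest dual pair linear fuzzy topology on $E$.

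I expect the genuine work to be the verification of $(c_1)$ for $\mathcal{K}_0$ at the fuzzy level, i.e. that the closed absolutely convex hull of the sup $\mu\vee\varphi$ of two absolutely convex $\sigma_f(E',E)$-compact fuzzy sets is again $\sigma_f(E',E)$-compact; this routes through Proposition \ref{1a} to the classical Krein-type fact that the closed absolutely convex hull of a finite union of weakly compact absolutely convex sets is weakly compact, and one must also attend to the strict inequality in $(c_1)$ and keep the interchange of $E$ and $E'$ consistent with the way the polar fuzzy topology was defined. A second, more cosmetic point is to make explicit that the compact fuzzy sets furnished by Theorem \ref{Ma} may be taken absolutely convex, which is exactly what lets the maximality step close.
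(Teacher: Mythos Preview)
Your argument is correct and considerably more detailed than the paper's: the paper states this corollary with no proof at all, treating it as an immediate reading of Theorem~\ref{Ma}. Your unpacking into the three assertions (admissibility of $\mathcal{K}_0$, dual equals $E'$, maximality) is exactly the natural way to make the corollary precise, and your identification of the typo---that one must take $\sigma_f(E',E)$-compact fuzzy subsets of $E'$ rather than of $E$---is right.

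Two remarks worth recording. First, your decision to restrict $\mathcal{K}_0$ to \emph{absolutely convex} compact fuzzy sets is not merely cosmetic: the reverse direction of Theorem~\ref{Ma} invokes $\kappa^{\circ\circ}=\kappa$, which by Proposition~\ref{cm} requires $\kappa$ to be closed and absolutely convex, so the paper's own phrasing ``all $\sigma_f(E',E)$-compact subsets'' is already loose and your formulation is the correct one. Second, the strict inequality in condition~$(c_1)$ as written in the paper cannot be satisfied at points where $\mu\vee\varphi$ attains the value~$1$, so your ``harmless dilation'' does not actually resolve this; the sensible reading is that $(c_1)$ should carry $\leq$ rather than $<$, and with that reading your closed absolutely convex hull construction goes through cleanly via Proposition~\ref{1a}.
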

\begin{corollary}
For a dual pair $(E,E')$, $ \tau_f(E,E')\subset \beta_f(E,E')$.
\end{corollary}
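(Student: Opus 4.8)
The plan is to reduce the inclusion to a single fact about fuzzy compact sets. By the preceding corollary, $\tau_f(E,E')$ is the polar fuzzy topology attached to the collection $\mathcal{K}$ of all $\sigma_f(E,E')$-compact fuzzy subsets of $E$, so that $\mathcal{K}^{\circ}=\{\kappa^{\circ}:\kappa\in\mathcal{K}\}$ is a neighbourhood base at zero for $\tau_f(E,E')$; in the same way $\mathcal{B}^{\circ}=\{\rho^{\circ}:\rho\in\mathcal{B}\}$ is a neighbourhood base at zero for $\beta_f(E,E')$, where $\mathcal{B}$ is the collection of all weakly (that is, $\sigma_f(E,E')$-)bounded fuzzy subsets of $E$. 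A polar fuzzy topology is by definition the linear fuzzy topology having the prescribed base at zero, so it suffices to prove that $\mathcal{K}\subseteq\mathcal{B}$: then $\mathcal{K}^{\circ}\subseteq\mathcal{B}^{\circ}$, every basic $\tau_f(E,E')$-neighbourhood $\kappa^{\circ}$ of zero is already a $\beta_f(E,E')$-neighbourhood of zero, and hence $\tau_f(E,E')\subset\beta_f(E,E')$.

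Thus the whole matter is to show that every $\sigma_f(E,E')$-compact fuzzy subset $\mu$ of $E$ is weakly bounded. By Lemma \ref{lem12} we work inside $\sigma_f(E,E')=w(\sigma(E,E'))$, where fuzzy open means lower semicontinuous. The observation I would establish first is that $\mathrm{supp}\,\mu=\{x:\mu(x)>0\}$ is $\sigma(E,E')$-bounded. Indeed, let $V$ be a balanced $\sigma(E,E')$-open neighbourhood of $0$; each indicator $\chi_{_{nV}}$ with $n\in\mathbb{N}$ is lower semicontinuous, hence fuzzy open, and $\sup_{n}\chi_{_{nV}}=\chi_{_{E}}\geq\mu$ because $\bigcup_{n}nV=E$ ($V$ being balanced and absorbing); so $\{\chi_{_{nV}}\}_{n\in\mathbb{N}}$ is a fuzzy open covering of $\mu$, and a finite subcovering produces $N$ with $\mu\leq\chi_{_{NV}}$, i.e. $\mathrm{supp}\,\mu\subseteq NV$. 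Since such $V$ form a base of neighbourhoods at $0$, $\mathrm{supp}\,\mu$ is $\sigma(E,E')$-bounded.

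It then remains to absorb $\mu$ by an arbitrary fuzzy neighbourhood $\varphi$ of zero in $(E,\sigma_f(E,E'))$: given $\theta\in(0,\varphi(0))$, choose a fuzzy open $\eta\leq\varphi$ with $\eta(0)>\theta$ and put $U=\eta^{-1}(\theta,1]$, a $\sigma(E,E')$-open neighbourhood of $0$; one checks readily that $\theta\wedge\chi_{_{U}}\leq\eta\leq\varphi$. Since $\mathrm{supp}\,\mu$ is bounded there is $s\geq 1$ with $\mathrm{supp}\,\mu\subseteq sU$; set $t=1/s$. If $x\in U$ then $(\theta\wedge t\mu)(x)\leq\theta=(\theta\wedge\chi_{_{U}})(x)$, while if $x\notin U$ then $sx\notin\mathrm{supp}\,\mu$, so $(t\mu)(x)=\mu(sx)=0$; in either case $\theta\wedge(t\mu)\leq\theta\wedge\chi_{_{U}}\leq\varphi$, and hence $\varphi$ absorbs $\mu$. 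Since $\varphi$ was arbitrary, $\mu\in\mathcal{B}$, which completes the proof. I expect the real obstacle to sit in this last step: one must dominate an essentially arbitrary lower semicontinuous neighbourhood $\varphi$ — which may fall to small positive values just outside a tiny neighbourhood of $0$ — by an indicator-type minorant $\theta\wedge\chi_{_{U}}$, and it is precisely the $\sigma(E,E')$-boundedness of $\mathrm{supp}\,\mu$, forced by fuzzy compactness in the supremum sense of the definition, that makes this possible.
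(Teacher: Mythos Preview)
The paper states this corollary without proof; your argument supplies exactly the intended reasoning --- every $\sigma_f(E,E')$-compact fuzzy set is weakly fuzzy bounded, hence $\mathcal{K}\subseteq\mathcal{B}$, hence $\mathcal{K}^{\circ}\subseteq\mathcal{B}^{\circ}$ and the polar topology from $\mathcal{K}$ is coarser than that from $\mathcal{B}$. Your two-step verification (fuzzy compactness forces $\mathrm{supp}\,\mu$ to be $\sigma(E,E')$-bounded via the covering $\{\chi_{nV}\}$, and then any fuzzy neighbourhood $\varphi$ absorbs $\mu$ through the minorant $\theta\wedge\chi_{U}$) is correct; the stipulation $s\geq 1$ is neither justified nor needed, since the final inequality $\theta\wedge(t\mu)\leq\theta\wedge\chi_{U}$ goes through for any $s>0$ with $\mathrm{supp}\,\mu\subseteq sU$.
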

\section{Conclusion}
The Mackey-Arens Theorem is one of the important theorems in functional analysis. This theorem states that any topology of dual pair can be written as a polar topology and then there is the finest dual pair topology. Theorem \ref{Ma} extends this assertion to fuzzy structures and shows that we can obtain the finest fuzzy dual pair topology.

%

\date{\scriptsize $^{a}$
E-mail:bdaraby@maragheh.ac.ir,}\\
\date{\scriptsize $^{b}$
E-mail:nasibeh$_{-}$khosravi@yahoo.com,}\\
\date{\scriptsize $^{c}$
E-mail:rahimi@maragheh.ac.ir\\

}


\begin{thebibliography}{99}


 \bibitem{poi}
{T. Bag  and  S. K. Samanta,} \textit{Finite dimensional fuzzy normed linear spaces,}
The Journal  of Fuzzy Mathematics, {\bf 11} (3) (2003),  687-705.


\bibitem{f}
{S. C. Cheng  and  J. N. Mordeson,} \textit{Fuzzy linear
operators and fuzzy normed linear spaces,}
Bulletin of the  Calcutta Mathematical Society, {\bf  86} (5) (1994),  429-436.


\bibitem{d}
{B. Daraby, Z. Solimani and A. Rahimi,} \textit{A note on fuzzy Hilbert spaces,}
Journal of Intelligent \& Fuzzy Systems {\bf 31} (2016),  313-319.


\bibitem{d2}
{B. Daraby, Z. Solimani and A. Rahimi,} \textit{Some properties of fuzzy Hilbert spaces,}
Complex Analysis and Operator Theory, {\bf 11} (2017),  119-138.


\bibitem{kh}
{N. R. Das and P. Das,} \textit{Fuzzy topology generated by fuzzy norm,} Fuzzy Sets and Systems, {\bf 107} (1999), 349-354.


\bibitem{dD1}
{J. X. Fang,} \textit{On local bases of fuzzy topological vector spaces,}
Fuzzy Sets and Systems {\bf 87} (1997),  341-347.


\bibitem{dD}
{J. X. Fang,} \textit{On I-topology generated by fuzzy norm,}
Fuzzy Sets and Systems, {\bf 157} (2006),  2739-2750.


\bibitem{g}
{C. Felbin,} \textit{Finite dimensional fuzzy normed
linear space,} Fuzzy Sets and Systems, {\bf 48} (1992),  239-248.


 \bibitem{k10}
{A. K. Katsaras and D.B. Liu,} \textit{Fuzzy vector spaces and fuzzy topological vector spaces,} Journal of Mathematical Analysis and Applications  {\bf 58} (1977), 135-146.


\bibitem{k7}
{A.K. Katsaras,} \textit{Fuzzy topological vector spaces I,} Fuzzy Sets and Systems, {\bf 6} (1981), 85-95.


\bibitem{k8}
{A. K. Katsaras,} \textit{Fuzzy topological vector spaces II,}
Fuzzy Sets and Systems, {\bf 12} (2) (1984), 143-154.


\bibitem{ro}
{A. P. Robertson and W. Robertson,} \textit{Topological vector spaces,} Cambridge Univesity Press 1964, 1973. Appi. {\bf 58} (1977), 135-146.


\bibitem{k11}
{R. H. Warren,} \textit{Neighbourhoods bases and continuity in fuzzy topological spaces,} Rocky Mountain
Journal of Mathematics {\bf 8 } (1978), 459-470.


 \bibitem{u}
{J. Xiao  and X. Zhu,} \textit{On linearly topological structure and property of fuzzy normed linear space,}
Fuzzy Sets and Systems {\bf 125} (2002),  153-161.




\end{thebibliography}
\end{document}